\documentclass[10pt, twoside]{amsart}
\usepackage[latin1]{inputenc}
\usepackage[T1]{fontenc}
\usepackage{amsfonts}
\usepackage{amssymb}
\usepackage{amsthm}
\usepackage{latexsym}
\usepackage{mathrsfs}
\usepackage{amsmath}
\usepackage{verbatim}
\usepackage{enumerate}

\textwidth12cm
\textheight19cm
\topmargin3cm
\pagestyle{headings}
\numberwithin{equation}{section}
\pagenumbering{roman}

\makeatletter
\newcommand\ackname{Acknowledgements}
\if@titlepage
  \newenvironment{acknowledgements}{%
      \titlepage
      \null\vfil
      \@beginparpenalty\@lowpenalty
      \begin{center}%
        \bfseries \ackname
        \@endparpenalty\@M
      \end{center}}%
     {\par\vfil\null\endtitlepage}
\else
  
\fi
\makeatother

\newcommand{\bfi}{\begin{fig}}
\newcommand{\efi}{\end{fig}}
\newcommand{\btab}{\begin{tab}}
\newcommand{\etab}{\end{tab}}
\newcommand{\barr}{\begin{array}}
\newcommand{\earr}{\end{array}}
\newcommand{\beqq}{\begin{equation}}
\newcommand{\eeqq}{\end{equation}}
\newcommand{\beao}{\begin{align*}}
\newcommand{\eeao}{\end{align*}\noindent}
\newcommand{\beam}{\begin{eqnarray}}
\newcommand{\eeam}{\end{eqnarray}\noindent}
\newcommand{\bdis}{\begin{displaymath}}
\newcommand{\edis}{\end{displaymath}\noindent}


\newcommand{\bbn}{\mathbb{N}}

\newcommand{\bbr}{\mathbb{R}}

\newcommand{\bbs}{\mathbb{S}}
\newcommand{\bbh}{\mathbb{H}}





\newcommand{\eps}{{\epsilon}}
\newcommand{\epst}{{\tilde{\epsilon}}}


\DeclareMathOperator{\Tr}{Tr}
\DeclareMathOperator{\diag}{diag}

\DeclareMathOperator{\dist}{dist}
\DeclareMathOperator{\diam}{diam}
\DeclareMathOperator{\maxd}{maxd}
\newtheorem{Satz}{Satz}[section]
\newtheorem{Theorem}[Satz]{Theorem}
\newtheorem{Korollar}[Satz]{Corollary}

\newtheorem{Proposition}[Satz]{Proposition}

\newtheorem{Lemma}[Satz]{Lemma}

\theoremstyle{definition}
\newtheorem{Definition}[Satz]{Definition}
\newtheorem{Annahme}[Satz]{Assumption}

\theoremstyle{remark}



\begin{document}

\pagenumbering{arabic}
\setcounter{page}{1}

\title{Mixed volume preserving curvature flows in hyperbolic space}
\author{Matthias Makowski}
\date{August 9, 2012}
\subjclass[2010]{35K55, 35K93, 52A39, 52A55, 53C44}
\keywords{curvature flows, mixed volumes, hyperbolic space}
\address{Matthias Makowski, Universit\"at Konstanz, 78467 Konstanz, Germany}
\def\fuaddress{@uni-konstanz.de}
\email{Matthias.Makowski\fuaddress}
\address{http://www.math.uni-konstanz.de/\~{}makowski/}
\begin{abstract}
We consider curvature flows with a curvature function $F$, which is monotone, symmetric, homogeneous of degree 1 and either convex or concave and inverse concave, and a volume preserving term. For initial hypersurfaces, which are compact and strictly convex by horospheres, we prove long time existence and exponential convergence to a geodesic sphere of the same mixed volume as the initial hypersurface.
\end{abstract}
\maketitle
\tableofcontents

\nopagebreak

\section{Introduction}
\label{Introduction}
Let $n\in \bbn$, $n\geq 2$. We fix $b\in \bbr_-^*$ and set $a:= \sqrt{|b|}$. Let $N_b$ be a \mbox{$(n+1)$-dimensional}, connected, simply connected Riemannian manifold of constant sectional curvature $b$, i.e. $N_b$ is isometric to $\bbh^{n+1}_{\frac{1}{a}}$, the hyperbolic space of radius $\frac{1}{a}$, 
\begin{equation}
	\bbh^{n+1}_{\frac{1}{a}} := \{p \in L^{n+2}: \langle p, p\rangle_L = -\frac{1}{a^2}, p^0 > 0\}.
\end{equation} 
Here $(L^{n+2}, \langle .,.\rangle)$ denotes the $(n+2)$-dimensional Lorentz-Minkowski space. We want to consider a curvature flow in $N_b$, which is then equivalent to consider a curvature flow in $\bbh^{n+1}_{\frac{1}{a}}$.

We show the long time existence and the exponential convergence to a geodesic sphere of the following curvature flow in $\bbh^{n+1}_{\frac{1}{a}}$:
\begin{equation}
\label{floweq}
\begin{split}
	\dot{x} &= (f - F)\, \nu,\\
	x(0) &= x_0,
\end{split}
\end{equation}
where $x_0: \bbs^n \rightarrow \bbh^{n+1}_{\frac{1}{a}}$ is the immersion of an initial, compact, connected, smooth hypersurface $M_0 := x_0(\bbs^n)$ which is furthermore required to be strictly convex by horospheres (this property will be explained further below). $\nu$ is the corresponding outer normal, $F$ is a smooth curvature function evaluated at the principal curvatures of the flow hypersurfaces $M_t$, $x(t)$ denotes the embedding of $M_t$ and $f$ is a volume preserving global term, $f = f_k$, see the definition below. 
We need to provide the definition of convexity by horospheres. However, we only give a rather analytic definition, for more geometric interpretations of this property we refer the reader to the papers \cite{BorGallRelHad}, \cite{BorMiqTot}, \cite{BorMiqCompHad} and \cite{BorVlaAsyHad}.
\begin{Definition}
A hypersurface $M$ in $\bbh^{n+1}_{\frac{1}{a}}$ is called (strictly) convex by horospheres, (strictly) h-convex for short, if its principal curvatures are (strictly) bounded from below by $a$ at each point.
\end{Definition}

Depending on which type of mixed volume has to be preserved, we define the global term similar as in \cite{McCoyMixedAreaGen}, however, we have to modify it for $k>1$ due to the curvature of $\bbh^{n+1}_{\frac{1}{a}}$:
\begin{equation}
\label{globTerm}
	f_k(t) = \frac{\int_{M_t}{(kH_k + a^2 (n-k+2) H_{k-2}) F\, \mathrm{d\mu_t}}}{\int_{M_t}{(kH_k + a^2 (n-k+2)H_{k-2})\,\mathrm{d\mu_t}}}.
\end{equation}
Here $H_k$, $k = 0, ..., n$,  denotes the $k$-th elementary symmetric polynomial,
\begin{equation}
H_k(\kappa_1, \cdots, \kappa_n) = \sum_{i_1<\cdots <i_k} \kappa_{i_1}\cdots\kappa_{i_k},\quad \kappa = (\kappa_i) \in \bbr^n, \, 1\leq k \leq n,
\end{equation} $H_0 = 1$ and $\mathrm{d\mu_t}$ is the volume element of $M_t$. For $k\leq 1$ we use the same definition as in \cite{McCoyMixedAreaGen}:
\begin{equation}
	f_k(t) = \frac{\int_{M_t}{H_k F\, \mathrm{d\mu_t}}}{\int_{M_t}{H_k}\,\mathrm{d\mu_t}}.
\end{equation}

We remind the definition of mixed volumes: 
For $k \in \{0, \ldots, n\}$ and a strictly convex hypersurface $M$ in $\bbh^{n+1}_{\frac{1}{a}}$ represented by a graph $u$ over a geodesic sphere, i.e. $M = $ graph $u_{|\bbs^n}$,  we have:
\begin{equation}
	V_{n+1-k}(M) = 
	\begin{cases}
	\int_{\bbs^n}{\int_0^{u(x)}{a^{-n}\sinh^n(a s) \,\mathrm{ds}}\,\mathrm{d\sigma_n}(x)} , & k = 0\\
	{n \choose (k-1)}^{-1}\int_{M}{H_{k-1}\, \mathrm{d\mu}}, & k = 1, \ldots, n,
	\end{cases}
\end{equation}
where $\mathrm{d\sigma_n}$ is the volume element of the sphere.

The possible curvature functions are divided into two classes:
\begin{Annahme}
\label{MainAssumption}
Let $\alpha \in [0, 1]$. Suppose $\tilde{F}$ is a smooth, symmetric function defined on $\Gamma$, where $\Gamma$ is the positive cone $\Gamma_+ := \{\kappa = (\kappa_i)\in \bbr^n: \kappa_i > 0 \,\, \forall \, i \in \{1, \ldots, n\}\}$. Set $\Gamma_{\alpha} := \{\kappa = (\kappa_i)\in \bbr^n: \kappa_i > \alpha \quad \forall \, i \in \{1, \ldots, n\}\}$. Let $\eta_\alpha:\Gamma_\alpha \rightarrow \Gamma_+$, $\kappa \mapsto \kappa - \alpha e$, where $e= (1, \ldots, 1)$. Then we define the curvature functions by $F := \tilde{F}\circ \eta$. Furthermore we need the following assumptions for the curvature function $\tilde{F}$:
\begin{itemize}
\item $\tilde{F}$ is positively homogeneous of degree $1$, i.e. $\forall\, \kappa \in \Gamma_+$, $\forall\, \lambda \in \bbr_+$: $\tilde{F}(\lambda \kappa) = \lambda \tilde F(\kappa)$.
\item $\tilde{F}$ is strictly increasing in each argument: $\forall\, i \in \{1, \ldots, n\}$, $\forall\, \kappa \in \Gamma_+$ there holds $\tilde F_i(\kappa)$ = $\frac{\partial \tilde F}{\partial \kappa_i}(\kappa) > 0$.
\item $\tilde F$ is positive, $\tilde F_{|\Gamma_+} > 0$, and $\tilde F$ is normalized, $\tilde F(1, \ldots, 1) = 1$.
\item Either:
\begin{enumerate}[(i)]
\item $\tilde F$ is convex.
\item $\tilde F$ is concave and inverse concave, i.e. $\tilde F_{-1}(\kappa_i) := - \tilde F(\kappa_i^{-1})$ is concave.
\end{enumerate}
\end{itemize} 
\end{Annahme}
The most important examples of convex curvature functions $\tilde F$ fulfilling these assumptions (apart from the normalization) are the mean curvature $H = \sum_{i=1}^n \kappa_i$, the length of the second fundamental form $|A| = \sqrt{\sum_{i=1}^n\kappa_i^2}$ and the completely symmetric functions $\gamma_k = \left(\sum_{|\alpha| = k} \kappa^\alpha\right)^{\frac{1}{k}}$, $1\leq k\leq n$. For a proof that these curvature functions are convex see \cite[p. 105]{Mitr}. Examples of curvature functions $\tilde F$ from the second class, namely the ones being concave and inverse concave, are $\left(\frac{H_k}{H_l}\right)^{\frac{1}{k-l}}$, $n\geq k> l \geq 0$, or the power means $\left(\sum_{i=1}^n \kappa_i^r \right)^{\frac{1}{r}}$ for $|r|\leq 1$ . We refer to \cite[Section 2]{AndrewsPinching} for a proof of this fact and for an account of the theory of this class of curvature functions.

Now we can state the main theorem:

\begin{Theorem}
\label{MainTheorem1}
Let $x_0$ be stated as earlier and suppose $F$ is a function satisfying the conditions in Assumption \ref{MainAssumption}. Then the flow \eqref{floweq} with $f = f_k$, $k\in \{0, \ldots, n\}$, has a unique, smooth solution $x$ existing for all times $0\leq t < \infty$, the flow hypersurfaces $M_t$ remain strictly convex by horospheres and the volume $V_{n+1-k}$ is preserved during the flow. Furthermore the flow converges exponentially for $t\rightarrow \infty$ to a geodesic sphere of the same volume $V_{n+1-k}$ as $M_0$.
\end{Theorem}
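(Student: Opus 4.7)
The plan is to follow the pipeline familiar from Euclidean volume-preserving flows (as in McCoy \cite{McCoyMixedAreaGen}), adapted to the hyperbolic ambient via the observation that for h-convex hypersurfaces in $\bbh^{n+1}_{1/a}$ the \emph{shifted} principal curvatures $\kappa_i - a$ play the role that the ordinary principal curvatures play for convex hypersurfaces in $\bbr^{n+1}$. Since $F = \tilde F\circ \eta_a$ with $\eta_a(\kappa) = \kappa - a e$, the natural choice of the shift parameter in Assumption \ref{MainAssumption} is $\alpha = a$, and all maximum principle arguments will be applied to the shifted Weingarten operator $\tilde h^i_j := h^i_j - a\,\delta^i_j$.

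Short-time existence is standard once $M_t$ is written as a graph over a geodesic sphere, since \eqref{floweq} then becomes a fully nonlinear scalar parabolic equation which is uniformly parabolic on strictly h-convex graphs. Preservation of the mixed volume $V_{n+1-k}$ follows from a first variation formula of the form
\[
\frac{d}{dt}V_{n+1-k}(M_t) = c_{n,k}\int_{M_t} w_k\,(f_k - F)\, \mathrm{d}\mu_t,
\]
where $w_k = kH_k + a^2(n-k+2)H_{k-2}$ for $k\geq 2$ and $w_k = H_k$ for $k\leq 1$; the right-hand side vanishes by the very definition \eqref{globTerm} of $f_k$.

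The core a priori estimate is the preservation of strict h-convexity. Computing the evolution equation of $h^i_j$, and hence of $\tilde h^i_j$, I would apply Hamilton's tensor maximum principle: the second-order term is absorbed using convexity respectively concavity of $\tilde F$ in the framework of Andrews, and the crucial point is that the sectional curvature terms coming from $\bbh^{n+1}_{1/a}$ contribute precisely the combination which makes $\tilde h^i_j\geq 0$ a preserved condition -- this is where the shift $\alpha=a$ is forced on us. With h-convexity in hand, uniform $C^0$ bounds on the inner and outer radius follow from combining volume preservation with the monotonicity of an auxiliary mixed volume, the monotonicity itself being a consequence of Cauchy-Schwarz applied to the definition of $f_k$ together with Newton-MacLaurin-type inequalities for the shifted curvatures. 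Upper and lower bounds on $F$ are obtained from its own scalar evolution equation, and a Tso-type auxiliary function then bounds the principal curvatures from above. Krylov-Safonov Hölder estimates and Schauder bootstrapping give uniform $C^{k,\alpha}$ bounds for all $k$, and long-time existence follows by continuation.

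For convergence I would identify a monotone functional $\Phi$ -- typically a second mixed volume whose ratio with $V_{n+1-k}$ is minimised on geodesic spheres by hyperbolic Alexandrov-Fenchel-type inequalities -- verify $\dot\Phi\leq 0$ with equality exactly on geodesic spheres, and combine this with the $C^{k,\alpha}$ compactness to deduce subsequential convergence $M_t\to S_\infty$ to a geodesic sphere whose radius is pinned down by the preserved $V_{n+1-k}$. Linearising \eqref{floweq} around $S_\infty$ in normal graph parametrisation then yields a selfadjoint elliptic operator with spectral gap, once the translation kernel is eliminated by the volume constraint together with an appropriate center-of-mass normalisation, and exponential $C^k$ convergence of $M_t$ to $S_\infty$ follows. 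I expect the two main obstacles to be the tensor maximum principle preservation of h-convexity, where the interaction of the shift with the hyperbolic curvature terms in the null-eigenvector step must be handled with care, and the identification of a monotone functional rigid enough to force geodesic spheres as the only critical points within the h-convex class.
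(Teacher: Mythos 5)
Your outline matches the paper's overall architecture up through the a priori estimates: short-time existence via the graph representation, preservation of $V_{n+1-k}$ from the first variation and the definition of $f_k$, preservation of h-convexity (and more precisely a quantitative curvature pinching) via a tensor maximum principle in the style of Andrews, and uniform inner/outer radius bounds from volume preservation. Two places, however, diverge from what the paper actually does, and the second one is a genuine gap.

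First, the \emph{lower} bound on $F$. You say it ``is obtained from its own scalar evolution equation,'' but the zeroth-order term $(F-f)\bigl(F^{ij}h^k_ih_{kj}-a^2F^{ij}g_{ij}\bigr)$ in $LF$ has the wrong sign at a spatial minimum whenever $F<f$, and $f$ is a nonlocal average that cannot be controlled pointwise. The maximum principle therefore does not directly prevent $F-(a-\alpha)$ from degenerating. The paper's fix is to combine a pointwise lower bound on $\sup_{M_t}F$ (a touching-sphere argument using the radius bounds) with the \emph{parabolic Harnack inequality} for $F-(a-\alpha)$, which propagates that supremum bound to an infimum bound. This step is essential and your sketch skips it.

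Second, and more substantially, the convergence argument. Your plan---find a monotone mixed-volume functional rigidified by hyperbolic Alexandrov--Fenchel inequalities, extract a subsequential limit, then linearize around the limiting sphere and exploit a spectral gap---is a genuinely different route from the paper, and it runs into an obstacle the paper explicitly flags: the hyperbolic analogues of the Minkowski/Alexandrov--Fenchel inequalities needed to make such a quotient monotone along these flows are \emph{not} available in the required generality (the paper's final section notes that the nonlocal term $f_k$ has a different structure than in the Euclidean case, so only some special cases of those inequalities can be derived). Without such an inequality you have no monotone quantity, hence no rigidity, hence no identified limit to linearize around. The paper sidesteps this entirely: it upgrades the preserved pinching $\kappa_1-a\geq\tilde\eps(H-na)$ to an \emph{exponentially improving} pinching $\kappa_1-a\geq\frac1n(1-e^{-\lambda t})(H-na)$ via the same tensor maximum principle (applied to a time-dependent comparison tensor, starting from a later time $t_0$ so that the inequality holds initially), and then invokes Schulze's interpolation argument to convert the exponential decay of $|A|^2-H^2/n$ into exponential $C^\infty$-convergence to a geodesic sphere. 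This avoids both the Alexandrov--Fenchel input and the center-of-mass/gauge-fixing subtleties of the linearization. If you want to carry your plan through, you would have to establish the missing hyperbolic Alexandrov--Fenchel inequalities for strictly h-convex hypersurfaces first, which is a nontrivial open problem in its own right.

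A smaller remark: the shift parameter $\alpha$ in Assumption \ref{MainAssumption} is a free parameter in $[0,1]$ in the paper and is \emph{not} forced to equal $a$; the estimates are phrased in terms of $F-(a-\alpha)$ and $\kappa_1-a$ and go through for all admissible $\alpha$, so asserting ``the natural choice $\alpha=a$'' narrows the theorem unnecessarily.
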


Finally, we want to name some of the works about volume preserving curvature flows in different ambient manifolds and discuss shortly the results obtained in this work.

Volume preserving curvature flows have been considered for various curvature functions in different settings. Roughly speaking, if one assumes a certain convexity assumption or pinching condition on the initial hypersurface and shows that this condition is preserved during the flow, then after proving a priori estimates the existence of the flow for all times $t \in [0,\infty)$ can be deduced. If the exponential convergence of a suitable quantity can be shown, then by using interpolation inequalities, the exponential convergence of the flow to a sphere or a geodesic sphere in the $C^\infty$-topology can be inferred.

In the case the ambient manifold is $\bbr^{n+1}$, volume preserving mean curvature flows have been previously considered by Gage for $n=1$ in \cite{Gage} and by Huisken for $n\geq 2$ in \cite{HuisVol}. In a series of papers, McCoy considered first the area preserving mean curvature flow in \cite{McCoyArea}, then the mixed volume preserving mean curvature flow in \cite{McCoyMixedArea} and later on extended the results to very general curvature functions in \cite{McCoyMixedAreaGen}. 

In 2007 Cabezas-Rivas and Miquel proved similar results for a volume preserving mean curvature flow in the hyperbolic space by assuming that the initial hypersurface is horosphere-convex, see \cite{CabMiqHyp}. Recently, Gerhardt has considered in \cite{GerhardtHyp} inverse curvature flows of compact, starshaped hypersurfaces in hyperbolic space and has obtained the convergence of these flows to a geodesic sphere after an appropriate rescaling.

There are few results on curvature flows of compact hypersurfaces in more general Riemannian manifolds: In \cite{HuisRiem}, Huisken has considered the mean curvature flow in Riemannian manifolds of bounded curvature, Andrews proved a similar result in Riemannian manifolds of bounded curvature but considering only very particular curvature functions. In our notation, he essentially allowed for $\tilde F$ being the harmonic mean curvature function and $\alpha = a$. Lately, Xu has considered in \cite{Xu} the harmonic mean curvature flow in Hadamard manifolds. Furthermore there are works by Gerhardt, see \cite[Chapter 3]{GerhCP}, where forced curvature flows in ambient manifolds of non-positive or constant curvatures are considered, and the convergence of the flow is shown on the assumption that suitable barriers exist. As for the volume preserving mean curvature flow, there is a paper by Alikakos and Freire, see \cite{AliFre}. They assume that the scalar curvature of the ambient space has nondegenerate critical points and the initial hypersurface of the flow is close enough to a geodesic sphere and prove the long time existence and convergence of the flow to a hypersurface of constant mean curvature. 

Our work is mainly motivated by the approaches in the papers \cite{CabMiqHyp} and \cite{McCoyMixedAreaGen}: 

Short time existence of the flow has been shown in \cite{McCoyMixedAreaGen}. For a detailed account of the short time existence and also for a proof of the uniqueness of the flow we refer to our work in \cite{MakVolLor}. Hence we can suppose the flow exists in a maximal time interval $[0, T^*)$ for some $T^*>0$ and is smooth. Our approach is as follows:

First we show that the convexity by horospheres is preserved during the flow. Then we prove a pinching condition for the principal curvatures of the evolving hypersurfaces. This allows us to show that there exists $\eps>0$ such that the following holds: Let  $0\leq t_0<T^*-\eps$ and let $p_{t_0}$ be the center of an inball of $M_{t_0}$ (an inball of $M_{t_0}$ is a ball contained in the interior of $M_{t_0}$ with maximal radius). Now let us represent $M_{t_0}$ as a graph over the geodesic sphere with center $p_{t_0}$. Then for $t \in [t_0, t_0+\eps)$ the hypersurface $M_t$ can still be represented as a graph over this sphere, which is a consequence of the pinching estimate. With this result we can use a well known approach (also as in \cite{CabMiqHyp}) to estimate the curvature function $F$ from above. An application of the Harnack inequality yields the boundedness of $F$ from below. These estimates yield the existence for all times. Next, we use a new argument to show that the pinching of the principal curvatures improves at an exponential rate. This allows us to use an argument from Schulze in \cite{SchulzeConvexity} to obtain the exponential convergence of the flow to a geodesic sphere.

The paper is organized as follows. In section 2, we introduce the notation which is used throughout the paper and some results and inequalities concerning the curvature functions are stated. In section 3, we provide some facts about graphs over geodesic spheres in hyperbolic space and list the evolution equations of several important quantities. In section 4, we show that the mixed volume $V_{n+1-k}$ and an initial pinching of the principal curvatures of the hypersurfaces are preserved during the flow \eqref{floweq} with $f=f_k$. In section 5, we show that a graph representation is valid for a short but fixed time interval and prove the uniform boundedness of $F$. Section 6 treats the lower bound for $F$, which we infer from the Harnack inequality. The estimates obtained so far will then allow to conclude, that the flow exists for all times. In section 7 we prove that the flow converges exponentially to a geodesic sphere. Finally, Section 8 gives an example of how the flow can be used to deduce volume inequalities.

\section{Notation and Curvature functions}
\label{Notation}

The main objective of this section is to formulate the governing equations of a hypersurface in $\bbh^{n+1}_{\frac{1}{a}}$ and to provide some results about curvature functions. For more detailed definitions about curvature functions, we refer the reader to \cite[Chapter 2.1, 2.2]{GerhCP}. Unless stated otherwise, the summation convention is used throughout the paper.

We will denote geometric quantities in the ambient space $\bbh^{n+1}_{\frac{1}{a}}$ by greek indices with range from $0$ to $n$ and usually with a bar on top of them, for example the metric and the Riemannian curvature tensor in $\bbh^{n+1}_{\frac{1}{a}}$ will be denoted by $(\bar{g}_{\alpha\beta})$ and $(\bar{R}_{\alpha\beta\gamma\delta})$ respectively, etc., and geometric quantities of a hypersurface $M$ by latin indices ranging from $1$ to $n$, i.e. the induced metric and the Riemannian curvature tensor on M are denoted by $(g_{ij})$ and $(R_{ijkl})$ respectively. Ordinary partial differentiation will be denoted by a comma whereas covariant differentiation will be indicated by indices or in case of possible ambiguity they will be preceded by a semicolon, i.e. for a function $u$ in $\bbh^{n+1}_{\frac{1}{a}}$, $(u_\alpha)$ denotes the gradient and $(u_{\alpha\beta})$ the Hessian, but e.g. the covariant derivative of the curvature tensor will be denoted by $(\bar{R}_{\alpha\beta\gamma\delta;\epsilon})$. 

The induced metric of the hypersurface will be denoted by $g_{ij}$, i.e.
\begin{equation}
	g_{ij} = \langle x_i, x_j \rangle \equiv \bar{g}_{\alpha \beta} x_i^\alpha x_j^\beta,
\end{equation}
$(g^{ij})$ denotes the inverse of $(g_{ij})$, the second fundamental form will be denoted by $(h_{ij})$. The outer normal is denoted by $\nu$, i.e. if $M$ is a starshaped hypersurface represented as a graph in geodesic polar coordinates around a sphere with center in the interior of $M$, then we choose the normal $\nu$ such that there holds
\begin{equation}
	\langle \frac{\partial}{\partial r}, \nu\rangle > 0.
\end{equation}

The geometric quantities of the spacelike hypersurface $M$ are connected through the \begin{em}Gauß formula\end{em}, which can be considered as the definition of the second fundamental form,
\begin{equation}
	x_{ij} = -h_{ij}\nu.
\end{equation}

Note that here and in the sequel a covariant derivative is always a full tensor, i.e.
\begin{equation}
	x_{ij}^\alpha = x_{,ij}^\alpha -\Gamma_{ij}^k x_k^\alpha + \bar{\Gamma}_{\beta\gamma}^\alpha x_i^\beta x_j^\gamma,
\end{equation}
where $\bar{\Gamma}^\alpha_{\beta \gamma}$ and $\Gamma^k_{ij}$ denote the Christoffel-symbols of the ambient space and hypersurface respectively.

The second equation is the \begin{em}Weingarten equation\end{em}:
\begin{equation}
	\nu_i = h_i^k x_k = g^{kj} h_{ij} x_k.
\end{equation}

Finally, we have the \begin{em}Codazzi equation\end{em}
\begin{equation}
	h_{ij;k} = h_{ik;j} + \bar{R}_{\alpha\beta\gamma\delta}\nu^\alpha x_i^\beta x_j^\gamma x_k^\delta = h_{ik;j},
\end{equation}
as well as the \begin{em}Gauß equation\end{em}
\begin{equation}
\begin{split}
	R_{ijkl} &= \{h_{ik}h_{jl} - h_{il}h_{jk}\} + \bar{R}_{\alpha\beta\gamma\delta}x^\alpha_i x_j^\beta x_k^\gamma x_l^\delta\\
	& = \{h_{ik}h_{jl} - h_{il}h_{jk}\} + a^2\{g_{il}g_{jk} - g_{ik}g_{jl}\}.
\end{split}
\end{equation}

Now we want to give some facts about the curvature functions. Firstly, we provide the definition of these functions and mention some identifications, which will be used in the sequel without explicitly stating them again.
\begin{Definition}
	Let $\Gamma \subset \bbr^n$ be an open, convex, symmetric cone, i.e.
	\begin{equation}
		(\kappa_i) \in \Gamma \Longrightarrow (\kappa_{\pi i}) \in \Gamma \quad \forall \, \pi \in \mathcal{P}_n,
	\end{equation}
	where $\mathcal{P}_n$ is the set of all permutations of order $n$. Let $f \in C^{m, \alpha}(\Gamma)$, $m \in \bbn$, $0\leq \alpha \leq 1$, be \textit{symmetric}, i.e.,
	\begin{equation}
		f(\kappa_i) = f(\kappa_{\pi i}) \quad \forall \, \pi \in \mathcal{P}_n.
	\end{equation}
	Then $f$ is said to be a \textit{curvature function} of class $C^{m,\alpha}$. For simplicity we will also refer to the pair $(f, \Gamma)$ as a curvature function.
\end{Definition}
Now denote by $\mathbf{S}$ the symmetric endomorphisms of $\bbr^n$ and by $\mathbf{S}_\Gamma$ the symmetric endomorphisms with eigenvalues belonging to $\Gamma$, an open subset of $\mathbf{S}$. If $(f, \Gamma)$ is a smooth curvature function, we can define a mapping
\begin{equation}
\begin{split}
	\tilde{F}: &\,\mathbf{S}_\Gamma \rightarrow \bbr,\\
	&A\mapsto f(\kappa_i),
\end{split}
\end{equation}
where the $\kappa_i$ denote the eigenvalues of $A$. For the relation between these different notions, especially the differentiability properties and the relation between their derivatives, see \cite[Chapter 2.1]{GerhCP}. Since the differentiability properties are the same for $f$ as for $F$ in our setting, see \cite[Theorem 2.1.20]{GerhCP}, we do not distinguish between these notions and write always $F$ for the curvature function. Hence at a point $x$ of a hypersurface we can consider a curvature function $F$ as a function defined on a cone $\Gamma \subset \bbr^n$, $\tilde{F} = \tilde{F}(\kappa_i)$ for $(\kappa_i) \in \Gamma$ (representing the principal curvatures at the point $x$ of the hypersurface), as a function depending on $(h_i^j)$, $\tilde{F} = \tilde{F}(h_i^j)$ or as a function depending on $(h_{ij})$ and $(g_{ij})$, $\tilde{F} = \tilde{F}(h_{ij}, g_{ij})$. However, we distinguish between the derivatives with respect to $\Gamma$ or $\mathbf{S}$. We summarize briefly our notation and important properties:

	For a smooth curvature function $\tilde{F}$ we denote by $\tilde{F}^{ij} = \frac{\partial \tilde{F}}{\partial h_{ij}}$, a contravariant tensor of order 2, and $\tilde{F}^j_i = \frac{\partial \tilde{F}}{\partial h_j^i}$, a mixed tensor, contravariant with respect to the index $j$ and covariant with respect to $i$. We also distinguish the partial derivative $\tilde{F}_{,i} = \frac{\partial \tilde{F}}{\partial \kappa_i}$ and the covariant derivative $\tilde{F}_{;i} = \tilde{F}^{kl}h_{kl;i}$. Furthermore $\tilde{F}^{ij}$ is diagonal if $h_{ij}$ is diagonal and in such a coordinate system there holds $\tilde{F}^{ii} = \frac{\partial \tilde{F}}{\partial \kappa_i}$. For a relation between the second derivatives see \cite[Lemma 2.1.14]{GerhCP}. Finally, if $\tilde{F} \in C^2(\Gamma)$ is concave (convex), then $\tilde{F}$ is also concave (convex) as a curvature function depending on $(h_{ij})$. 
	
For $\alpha \in [0,1]$ and $\eta_\alpha$ as in the assumption \ref{MainAssumption} we can treat the derivatives of $F= \tilde{F}\circ\eta_\alpha$ essentially as above by using the chain rule.

With these definitions we can turn to special classes of curvature functions. 

We note some important properties of the elementary symmetric polynomials:
\begin{Lemma}
\label{symPol}
Let $1\leq k \leq n$ be fixed.
	\begin{enumerate}[(i)]
		\item We define the convex cone 
			\begin{equation}
			\label{GammaK}
				\Gamma_k = \{(\kappa_i) \in \bbr^n: H_1(\kappa_i) > 0, H_2(\kappa_i) > 0, \ldots, H_k(\kappa_i) > 0 \}.
			\end{equation}
			Then $H_k$ is strictly monotone on $\Gamma_k$ and $\Gamma_k$ is exactly the connected component of
			\begin{equation}
				\{(\kappa_i) \in \bbr^n: H_k(\kappa_i) > 0\}
			\end{equation}
			containing the positive cone.
		\item For fixed $i$, no summation over $i$, there holds
			\begin{equation}
			\label{DerElemPol}
				H_k = \frac{\partial H_{k+1}}{\partial \kappa_i} + \kappa_i \frac{\partial H_k}{\partial \kappa_i}.
			\end{equation}
	\end{enumerate}
\end{Lemma}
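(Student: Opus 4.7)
I would split the sum defining $H_{k+1}$ according to whether the fixed index $i$ appears among $(i_1,\ldots,i_{k+1})$. Writing $H_m(\kappa\mid i)$ for the $m$-th elementary symmetric polynomial in the $n-1$ variables $\kappa_j$ with $j\neq i$, one gets the basic decomposition $H_m(\kappa) = H_m(\kappa\mid i) + \kappa_i\, H_{m-1}(\kappa\mid i)$, and differentiation yields $\partial H_m/\partial \kappa_i = H_{m-1}(\kappa\mid i)$. Applying both formulas with $m=k+1$ and $m=k$ and substituting gives $\partial H_{k+1}/\partial \kappa_i + \kappa_i\,\partial H_k/\partial\kappa_i = H_k(\kappa\mid i) + \kappa_i H_{k-1}(\kappa\mid i) = H_k(\kappa)$, which is the claimed recursion.

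\textbf{For part (i), the plan is to use G{\aa}rding's theory of hyperbolic polynomials.} One checks that $H_k$ is hyperbolic with respect to $e=(1,\ldots,1)$: the roots of $t\mapsto H_k(\kappa + te)$ are real for every $\kappa\in\bbr^n$, which follows by expanding via the formula $H_k(\kappa+te) = \sum_{j=0}^{k}\binom{n-k+j}{j}t^{j}H_{k-j}(\kappa)$ and invoking Newton's identities together with the fact that elementary symmetric polynomials have only real roots when viewed in the right variable. The G{\aa}rding cone $C_k$, defined as the connected component of $\{H_k>0\}$ containing $e$ (equivalently the positive cone $\Gamma_+$, which lies in a single such component since $H_k>0$ on $\Gamma_+$), is then known to be an open, convex, symmetric cone on which $H_k^{1/k}$ is concave and each $\partial H_k/\partial \kappa_i>0$; this is exactly the strict monotonicity claim.

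\textbf{The main obstacle is the identification $C_k=\Gamma_k$.} The inclusion $\Gamma_k\subset C_k$ is proved by induction on $k$: given $\kappa\in\Gamma_k$, the straight-line path $s\mapsto (1-s)e+s\kappa$ keeps all of $H_1,\ldots,H_k$ positive (one uses Maclaurin-type inequalities, or more directly the recursion from part (ii) applied along the path), so $\kappa$ lies in the same component of $\{H_k>0\}$ as $e$. For the reverse inclusion $C_k\subset \Gamma_k$ I would argue by induction as well: if $\kappa\in C_k$, then using part (ii) summed over $i$ one obtains, with Euler's relation for the homogeneous polynomial $H_k$, an identity of the form $(n-k)H_k + \text{(something involving $H_{k-1}$ and the $\partial H_k/\partial\kappa_i$)} = \ldots$ forcing $H_{k-1}(\kappa)>0$ whenever $H_k>0$ and $\partial H_k/\partial\kappa_i>0$ on the component; iterating downward gives $H_j(\kappa)>0$ for all $j\leq k$. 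The delicate point is handling the passage from the G{\aa}rding-cone characterization (component of positivity) to the algebraic inequalities $H_j>0$, and I would at this stage simply invoke the classical results in \cite[Chapter 2]{GerhCP} rather than reprove them.
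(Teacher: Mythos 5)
Your part (ii) is exactly what the paper means by ``follows directly from the definition'': the decomposition $H_m(\kappa)=H_m(\kappa\mid i)+\kappa_i H_{m-1}(\kappa\mid i)$ and $\partial H_m/\partial\kappa_i = H_{m-1}(\kappa\mid i)$ are the standard facts, and your substitution is correct.

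For part (i) the paper does not give an argument at all --- it simply cites \cite[Section 2]{HuiskSinestr} --- so your G{\aa}rding-theoretic route is a genuinely different (and more conceptual) way to organize the proof, and it can be made to work. But your sketch of the inclusion $C_k\subset\Gamma_k$ has a concrete gap at the word ``iterating.'' Summing part (ii) over $i$ and using Euler's relation gives $\sum_i \partial H_k/\partial\kappa_i = (n-k+1)H_{k-1}$; combined with $\partial H_k/\partial\kappa_i>0$ on $C_k$ (from G{\aa}rding) this does yield $H_{k-1}>0$ on $C_k$. To descend further, however, you need $\partial H_{k-1}/\partial\kappa_i>0$ on $C_k$, and this does \emph{not} follow from what you have established so far: positivity of the derivatives of $H_{k-1}$ is a G{\aa}rding-cone fact for $H_{k-1}$, valid a priori only on $C_{k-1}$, not on $C_k$. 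The missing step is the nesting of G{\aa}rding cones: having just shown $H_{k-1}>0$ on the connected set $C_k\ni e$, conclude $C_k\subset C_{k-1}$, then apply G{\aa}rding to $H_{k-1}$ on $C_{k-1}\supset C_k$ to get $\partial H_{k-1}/\partial\kappa_i>0$, and repeat downward. Once the nesting $C_k\subset C_{k-1}\subset\cdots\subset C_1$ is in place, $C_k\subset\Gamma_k$ is immediate. Also, your stated justification of hyperbolicity of $H_k$ with respect to $e$ (``Newton's identities plus real-rootedness'') is vague; the clean argument is that $H_n(\kappa-te)=\prod_i(\kappa_i-t)$ is manifestly real-rooted in $t$, and $H_k$ is, up to a constant, an $(n-k)$-fold $t$-derivative of it, so Rolle preserves real-rootedness. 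Neither of these issues is fatal --- as you say, they are classical --- but as written the proposal does not quite close the loop that the paper outsources to \cite{HuiskSinestr}.
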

\begin{proof}
	The convexity of the cone $\Gamma_k$ and (i) follows from \cite[Section 2]{HuiskSinestr}
	and (ii) follows directly from the definition of the $H_k$.
\end{proof}\qed

As a consequence we obtain:
\begin{Lemma}
\label{divFree}
	Let $N$ be a semi-Riemannian space of constant curvature, then for the symmetric polynomials $F= H_k$, $1\leq k \leq n$, the tensor $F^{ij}$ evaluated at $M$, where $M$ is an arbitrary admissible hypersurface, is divergence free. In case $k=2$ it suffices to assume that $N$ is an Einstein manifold.
\end{Lemma}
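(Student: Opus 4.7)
The plan is to combine the generalized Kronecker delta representation of $\partial H_k/\partial h_{ij}$ with the Codazzi equation from the introduction; the antisymmetry of the delta will clash with the total symmetry of $h_{ij;k}$ in constant sectional curvature, forcing the divergence to vanish.

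More concretely, I would first express
\[
F^{ij}=g^{i\ell}\,\frac{\partial H_k}{\partial h^{\ell}_{j}}=\frac{1}{(k-1)!}\,\delta^{j\,j_1\cdots j_{k-1}}_{\ell\,i_1\cdots i_{k-1}}\,g^{i\ell}\,h^{i_1}_{j_1}\cdots h^{i_{k-1}}_{j_{k-1}},
\]
where $\delta^{\cdots}_{\cdots}$ is the generalized Kronecker delta obtained by applying $(k-1)!\cdot\partial/\partial h^{\ell}_{j}$ to the classical identity $H_k=\tfrac{1}{k!}\delta^{j_1\cdots j_k}_{i_1\cdots i_k}h^{i_1}_{j_1}\cdots h^{i_k}_{j_k}$. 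Since $g$, $g^{-1}$, and $\delta$ are parallel, taking $\nabla_j$ only hits the $h^{i_\alpha}_{j_\alpha}$ factors, and by symmetry of $\delta$ in the remaining $k-1$ pairs all such contributions coincide, yielding
\[
F^{ij}{}_{;j}=\frac{1}{(k-2)!}\,\delta^{j\,j_1\cdots j_{k-1}}_{\ell\,i_1\cdots i_{k-1}}\,g^{i\ell}\,h^{i_1}_{j_1;j}\,h^{i_2}_{j_2}\cdots h^{i_{k-1}}_{j_{k-1}}.
\]
On a space of constant sectional curvature the Codazzi equation from the introduction reduces to $h_{ab;c}=h_{ac;b}$, so $h^{i_1}_{j_1;j}$ is symmetric under $j_1\leftrightarrow j$, while the upper-index block of $\delta$ is antisymmetric under the same swap. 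Relabelling the dummy indices $j\leftrightarrow j_1$ therefore shows $F^{ij}{}_{;j}=-F^{ij}{}_{;j}=0$.

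For the special case $k=2$ on an Einstein manifold a direct computation is cleaner. Here $F^{ij}=H\,g^{ij}-h^{ij}$, hence $F^{ij}{}_{;j}=H^{;i}-h^{ij}{}_{;j}$. Applying the full Codazzi identity $h_{\ell j;m}-h_{\ell m;j}=\bar R_{\alpha\beta\gamma\delta}\,\nu^{\alpha}x_\ell^{\beta}x_j^{\gamma}x_m^{\delta}$ and contracting with $g^{\ell i}g^{jm}$, with $g^{jm}x_j^{\gamma}x_m^{\delta}=\bar g^{\gamma\delta}-\nu^{\gamma}\nu^{\delta}$ and the usual symmetries of $\bar R$, reduces $h^{ij}{}_{;j}-H^{;i}$ to a multiple of $g^{i\ell}\,\bar R_{\alpha\beta}\,\nu^{\alpha}x_\ell^{\beta}$; in an Einstein space $\bar R_{\alpha\beta}\propto \bar g_{\alpha\beta}$, and $\bar g_{\alpha\beta}\nu^{\alpha}x_\ell^{\beta}=0$ finishes the argument.

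The main obstacle I anticipate is keeping the combinatorics of the first step transparent: one has to track which index of $\delta$ plays which role so that the antisymmetry/symmetry clash on the single derivative-bearing factor is unambiguous. Once the Kronecker-delta expression is in place, the general constant-curvature case collapses to a one-line swap of dummy indices, and the $k=2$ Einstein case to a one-line Ricci contraction.
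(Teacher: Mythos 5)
Note first that the paper offers no argument for this lemma at all; its ``proof'' consists of the single citation \cite[Lemma 5.8]{GerhSurvey}. Your proposal therefore cannot be compared to a proof in the paper, but it does supply the standard argument that citation encodes. For the general-$k$ constant-curvature case your reasoning is correct: after differentiating the Kronecker-delta representation you obtain
\begin{equation*}
F^{ij}{}_{;j}=\frac{1}{(k-2)!}\,\delta^{j\,j_1\cdots j_{k-1}}_{\ell\,i_1\cdots i_{k-1}}\,g^{i\ell}\,h^{i_1}_{j_1;j}\,h^{i_2}_{j_2}\cdots h^{i_{k-1}}_{j_{k-1}},
\end{equation*}
and since the curvature term in the paper's Codazzi equation vanishes in constant curvature, $h^{i_1}_{j_1;j}=h^{i_1}_{j;j_1}$ is symmetric in $(j,j_1)$, while the generalized Kronecker delta is antisymmetric there; relabelling the two dummy indices kills the sum. (For $k=1$ the prefactor $1/(k-2)!$ degenerates, but there $F^{ij}=g^{ij}$ is parallel, so the claim is trivial.)

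The $k=2$ Einstein step has the right conclusion but the contraction you spell out is not the one that produces it. Contracting $h_{\ell j;m}-h_{\ell m;j}=\bar R_{\alpha\beta\gamma\delta}\nu^{\alpha}x_\ell^{\beta}x_j^{\gamma}x_m^{\delta}$ with $g^{\ell i}g^{jm}$ yields the tautology $0=0$: the two terms on the left coincide after relabelling $j\leftrightarrow m$, and on the right $g^{jm}x_j^{\gamma}x_m^{\delta}=\bar g^{\gamma\delta}-\nu^{\gamma}\nu^{\delta}$ is symmetric in $\gamma,\delta$ while $\bar R_{\alpha\beta\gamma\delta}$ is antisymmetric in that pair. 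What one actually needs is
\begin{equation*}
h^{ij}{}_{;j}-H^{;i}=g^{i\ell}g^{jm}\left(h_{\ell m;j}-h_{jm;\ell}\right);
\end{equation*}
here you first use $h_{jm}=h_{mj}$, then apply Codazzi to $h_{mj;\ell}$, to get
\begin{equation*}
h^{ij}{}_{;j}-H^{;i}=-g^{i\ell}\,g^{jm}\,\bar R_{\alpha\beta\gamma\delta}\,\nu^{\alpha}x_m^{\beta}x_j^{\gamma}x_\ell^{\delta}
=-g^{i\ell}\,\bar g^{\beta\gamma}\bar R_{\alpha\beta\gamma\delta}\,\nu^{\alpha}x_\ell^{\delta},
\end{equation*}
a Ricci contraction that vanishes when $\bar R_{\alpha\delta}\propto\bar g_{\alpha\delta}$, as you assert. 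So the idea is sound, but the index bookkeeping in that last paragraph must be corrected before the $k=2$ claim is actually established.
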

\begin{proof}
	The proof of the Lemma can be found in \cite[Lemma 5.8]{GerhSurvey}.
\end{proof}
Now we state some well-known facts for general curvature functions:
\begin{Lemma}
There holds:
\begin{enumerate}[(i)]
\item
	Let $F \in C^2(\Gamma_+)$ be a concave (convex) curvature function, homogeneous of degree 1 with $F(1,\ldots, 1) > 0$, then 
	\begin{equation}
	\label{FHIneq}
	F\leq (\geq) \frac{F(1, \ldots, 1)}{n} H.
	\end{equation}
\item Let $F\in C^2(\Gamma_+)$ be a strictly monotone, concave (convex) curvature function, positively homogeneous of degree 1, then for all $\kappa \in \Gamma_+$ there holds
\begin{equation}
\label{FGIneq}
\sum_{i=1}^nF_i(\kappa) \geq (\leq)\, F(1, \ldots, 1).
\end{equation}
\item Let $F\in C^2(\Gamma_+)$ be a curvature function. Then if $F$ is convex (concave) in $\Gamma_+$, then at all $\kappa\in \Gamma_+$ we have for all $i\neq j$
	\begin{equation}
	\label{FConvex}
		\frac{F_i - F_j}{\kappa_i - \kappa_j} \geq (\leq)\, 0.
	\end{equation}
\end{enumerate}
\end{Lemma}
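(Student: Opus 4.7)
My plan is to treat the three parts separately, as each is a standard property of symmetric, homogeneous curvature functions and the proofs are essentially independent.

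For part (i), the plan is to combine symmetry with Jensen's inequality and then exploit homogeneity. Since $F$ is symmetric, we can write
\[
F(\kappa_1, \ldots, \kappa_n) = \frac{1}{n!}\sum_{\pi \in \mathcal{P}_n} F(\kappa_{\pi(1)}, \ldots, \kappa_{\pi(n)}).
\]
If $F$ is concave, Jensen's inequality bounds this by $F$ evaluated at the average of all permutations, which is the diagonal point $(\bar\kappa, \ldots, \bar\kappa)$ with $\bar\kappa = H/n$. Homogeneity of degree 1 then gives $F(\bar\kappa,\ldots,\bar\kappa) = \frac{H}{n} F(1,\ldots,1)$. The convex case reverses the inequality.

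For part (ii), the plan is to use the supporting hyperplane characterization of concavity together with Euler's relation. For concave $F$, at any $\kappa \in \Gamma_+$ and $\kappa' = (1,\ldots,1)$,
\[
F(1,\ldots,1) \leq F(\kappa) + \sum_{i=1}^n F_i(\kappa)(1 - \kappa_i) = F(\kappa) + \sum_{i=1}^n F_i(\kappa) - \sum_{i=1}^n F_i(\kappa)\kappa_i.
\]
Euler's identity for degree-1 homogeneous functions yields $\sum_i F_i(\kappa)\kappa_i = F(\kappa)$, and the two $F(\kappa)$ terms cancel, leaving the claimed inequality. The convex case is analogous with reversed inequality.

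Part (iii) is the only one where a small trick is needed, and I expect it to be the most delicate step to write cleanly. The plan is to restrict $F$ to the line $\kappa(\lambda) := \kappa + \lambda(e_j - e_i)$ and use symmetry. The derivative along this line is $g(\lambda) := F_j(\kappa(\lambda)) - F_i(\kappa(\lambda))$, which is monotone in $\lambda$ by convexity (nondecreasing) or concavity (nonincreasing) of $F$. At the particular value $\lambda_0 = (\kappa_i - \kappa_j)/2$ the $i$-th and $j$-th entries of $\kappa(\lambda_0)$ coincide, so by symmetry $F_i = F_j$ there, i.e., $g(\lambda_0) = 0$. Comparing the signs of $\lambda_0$ and $0$ (depending on whether $\kappa_i > \kappa_j$ or $\kappa_i < \kappa_j$) with the monotonicity of $g$ forces the sign of $g(0) = F_j(\kappa) - F_i(\kappa)$ relative to $\kappa_i - \kappa_j$, yielding $(F_i - F_j)/(\kappa_i - \kappa_j) \geq 0$ in the convex case and $\leq 0$ in the concave case. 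The main care point here is to track the sign conventions correctly when passing between the cases $\kappa_i > \kappa_j$ and $\kappa_i < \kappa_j$; otherwise no obstacle is expected.
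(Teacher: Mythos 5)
The paper offers no proof of its own here---it simply refers to \cite[Lemma 2.2.20, Lemma 2.2.19, Lemma 2.1.14]{GerhCP}---so your self-contained argument is a genuine supplement rather than a parallel route. All three parts are correct: for (i) the symmetrization-plus-Jensen argument combined with homogeneity is exactly the standard proof (and the one that average of the permuted points lands on the diagonal $(H/n,\dots,H/n)$ is right); for (ii) the supporting-hyperplane inequality at $(1,\dots,1)$ together with Euler's relation $\sum_i F_i\kappa_i = F$ is clean and complete; and for (iii) the restriction to the segment $\kappa(\lambda) = \kappa + \lambda(e_j-e_i)$, which stays in the convex cone $\Gamma_+$ between $\lambda=0$ and $\lambda_0=(\kappa_i-\kappa_j)/2$, together with the symmetry fact $g(\lambda_0)=0$ and monotonicity of $g$, pins the sign of $g(0)$ correctly in both orderings of $\kappa_i,\kappa_j$. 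These are in fact the standard arguments one finds in the cited source, so in substance your approach agrees with the paper's intent.
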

\begin{proof}
	See \cite[Lemma 2.2.20, Lemma 2.2.19, Lemma 2.1.14]{GerhCP}.
\end{proof}

\section{Graph representation, evolution equations}

First of all, we cite Hadamard's theorem in hyperbolic space, for a proof see \cite[Theorem 10.3.1]{GerhCP}. Since the proof can be easily adjusted to the hyperbolic space of radius $a^{-1}$, we only state the result:
\begin{Theorem}
Let $M$ be a compact, connected, $n$-dimensional manifold and
\begin{equation}
x:M\rightarrow \bbh^{n+1}_{\frac{1}{a}}
\end{equation}
a strictly convex immersion of class $C^2$, i.e., the second fundamental form with respect to any normal is always (locally) invertible. Then the immersion is actually an embedding and $\tilde{M} = x(M)$ is a strictly convex hypersurface that bounds a strictly convex body $\hat M \subset \bbh^{n+1}$. $\tilde M$ and $M$ are moreover diffeomorphic to $\bbs^n$ and orientable.
\end{Theorem}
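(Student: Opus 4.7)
The plan is to reduce to the classical Hadamard theorem in Euclidean space by passing to the Klein (projective) model of $\bbh^{n+1}_{1/a}$. In this model the ambient manifold is realized as the open Euclidean ball of radius $1/a$ in $\bbr^{n+1}$ equipped with the Beltrami metric, and the crucial property is that hyperbolic geodesics coincide, as unparametrized point sets, with Euclidean straight-line segments in the ball. As immediate consequences, the tangent hyperplane to a $C^2$ hypersurface at any point agrees in the hyperbolic and Euclidean senses, and a subset of the ball is geodesically convex in the hyperbolic metric if and only if it is convex in the Euclidean sense.

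Next I would transfer the strict convexity hypothesis to the Euclidean setting. Strict convexity of a $C^2$ immersed hypersurface is equivalent to the statement that, near each point, the hypersurface lies strictly on one side of its tangent hyperplane with second-order contact only at the point of tangency. Since the tangent hyperplanes agree in both metrics, this condition is invariant under the identification with the ball, and so $x$, viewed in $\bbr^{n+1}$, is a compact, connected, strictly Euclidean-convex $C^2$ immersion. The classical Hadamard/Sacksteder theorem then applies via the Euclidean Gauss map: strict convexity makes $\nu_{\mathrm{eucl}} : M \to \bbs^n$ a local diffeomorphism, hence a covering by compactness of $M$, hence a diffeomorphism since $n\geq 2$ ensures $\bbs^n$ is simply connected. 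This single step yields that $x$ is an embedding, that $M\cong \tilde M$ is diffeomorphic to $\bbs^n$, and that $\tilde M$ bounds a strictly convex Euclidean body $\hat M$ in the ball.

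Transferring back through the Klein model, $\hat M$ becomes a strictly convex body in $\bbh^{n+1}_{1/a}$ (by the body-level equivalence of the two convexities), and $\tilde M$ is orientable as a bounding hypersurface, with the outer unit normal providing a global orientation that then pulls back to $M$ via $x$. The main obstacle I anticipate is rigorously verifying that strict hyperbolic convexity of a $C^2$ immersion passes to strict Euclidean convexity in the Klein model: while it is intuitive, it needs either the local one-sided tangent hyperplane characterization above to be made precise, or an explicit computation comparing the two second fundamental forms via the transition formulas between the projectively related metrics. Once this technical step is in place, all remaining assertions of the theorem—embedding, diffeomorphism with $\bbs^n$, existence and strict convexity of $\hat M$, and orientability—follow directly from the classical Euclidean result.
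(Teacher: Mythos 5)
The paper does not actually give a proof of this statement; it cites Gerhardt \cite[Theorem~10.3.1]{GerhCP} and remarks that the argument there adapts to radius $a^{-1}$. Gerhardt's cited proof is precisely the reduction you propose: the Beltrami point map, i.e.\ the Klein projective model, sends hyperbolic geodesics to Euclidean line segments, so the convexity hypotheses transfer and the classical Euclidean Hadamard theorem applies. So your plan coincides with the cited proof.

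On the technical point you correctly flag (that strict hyperbolic convexity passes to strict Euclidean convexity): the cleanest way is not a metric computation but the observation that the Klein and Euclidean Levi--Civita connections are \emph{projectively} equivalent, i.e.\ $\bar\nabla_X Y=\nabla_X Y+\phi(X)Y+\phi(Y)X$ for some $1$-form $\phi$, because they share the same unparametrized geodesics. For $X,Y$ tangent to $M$ the difference term is tangent to $M$, so the second fundamental forms, regarded as quadratic forms valued in the quotient $T_pN/T_pM$, literally coincide; after choosing outward normals in each metric the scalar second fundamental forms differ only by a pointwise positive factor, hence definiteness and its sign transfer. (Note this would \emph{not} hold for a merely conformal change, which alters $h_{ij}$ by a gradient term, so the projective relationship is essential here.) With that in place your argument is complete.
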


The fact that such a hypersurface bounds a strictly convex body makes it possible to represent it as a graph over a geodesic sphere. 
Hence let $M$ be a strictly convex hypersurface in $\bbh^{n+1}_{\frac{1}{a}}$, let $p \in $ int $\hat M$ and consider geodesic polar coordinates centered at $p$. Then the metric can be expressed as
\begin{equation}
 d\bar{s}^2 = dr^2 + \bar{g}_{ij} dx^idx^j,
\end{equation}
where $\sigma_{ij}$ is the canonical metric of $\bbs^n$ and 
\begin{equation}
\label{Metlvl}
\bar{g}_{ij} = a^{-2} \sinh^2 (a r) \sigma_{ij}
\end{equation} is the induced metric of $S_r(p)$, the geodesic spheres with center $p$ and radius $r$. A simple calculation using $\bar{h}_{ij} = \frac{1}{2} \dot{\bar{g}}_{ij}$ yields 
\begin{equation}
\label{SecFFlvl}
\bar{h}_{ij} = a\coth(a r) \bar{g}_{ij},
\end{equation} 
where $\bar{h}_{ij}$ denotes the second fundamental form of $S_r(p)$.

Let $M = $ graph $u_{|\bbs^n} = \{(x^0, x): x^0 = u(x), x\in \bbs^n\}$, then the induced metric has the form
\begin{equation}
	g_{ij} = u_iu_j + \bar{g}_{ij},
\end{equation} 
where $\bar{g}_{ij}$ is evaluated at $(u,x)$ and its inverse $(g^{ij}) = (g_{ij})^{-1}$ can be expressed as
\begin{equation}
	g^{ij} = \bar{g}^{ij} - v^{-2} u^i u^j,
\end{equation}
where $u^i = \bar{g}^{ik} u_k$ with $(\bar{g}^{ik}) = (\bar{g}_{ik})^{-1}$ and 
\begin{equation}
v^2 = 1 + \bar{g}^{ij} u_iu_j \equiv 1 + |Du|^2.
\end{equation}  
The outward normal has the following representation in these coordinates
\begin{equation}
(\nu^\alpha) = v^{-1}(1, -u^i).
\end{equation}
Looking at the component $\alpha=0$ in the Gau\ss{} formula yields the equation
\begin{equation}
v^{-1}h_{ij} = -u_{ij} - \bar{\Gamma}^0_{00} u_i u_j - \bar{\Gamma}^0_{0i} u_j -\bar{\Gamma}^0_{0j} u_j - \bar{\Gamma}^0_{ij},
\end{equation}
where the covariant derivatives are taken with respect to the induced metric of $M$ and
\begin{equation}
- \bar{\Gamma}^0_{ij} = \bar{h}_{ij}, \quad \bar{\Gamma}^0_{00} = \bar{\Gamma}^0_{0i}  = \bar{\Gamma}^0_{0j} = 0.
\end{equation}

From now on we fix $k$, $0\leq k\leq n$, and consider the flow \eqref{floweq} with $f=f_k$.
As already mentioned in the introduction, short-time existence has been proved for this flow, so we can assert the flow exists in the class $C^{\infty}$ in the time interval $[0, T^*)$ for some $T^*>0$. Hence we can state the evolution equations of the quantities to be used in the sequel, where we note that all derivatives are covariant derivatives taken with respect to the induced metric of $M$ and the time derivatives are total derivatives, i.e. covariant derivatives of tensor fields defined over the curve $x(t)$. Let $g := \det(g_{ij})$ and note that $\bar{H}n^{-1} = a\cosh(au)(\sinh(au))^{-1}$. Furthermore, for a function $\varphi \in C^{\infty}(M)$ we define $L \varphi := \dot\varphi - F^{kl}\varphi_{;kl}$, with a similar definition for tensors.
\begin{Lemma} (Evolution equations)
\allowdisplaybreaks{
\begin{align}
& \dot{g}_{ij} = -2(F-f) h_{ij},\\
\label{EvDet}
& \dot{\sqrt{g}} = -(F-f)H \sqrt{g},\\
\label{dotU}
& \dot u = -v^{-1}(F - f)\\
\label{partialU}
& \frac{\partial u}{\partial t} = -v(F-f)\\
\label{EvGraph}
& Lu = (\alpha F^{ij} g_{ij} + f) v^{-1} + (-F^{ij}g_{ij} + F^{ij}u_iu_j) \bar{H}n^{-1},\\
& L\chi = - F^{ij}h^k_ih_{kj} \chi- 2 \chi^{-1} F^{ij} \chi_i\chi_j + (2 F +\alpha F^{ij}g_{ij} - f)\bar{H}n^{-1} v\chi,\\
&\textnormal{ where } \chi = \frac{v}{\sinh(a u)},\notag\\
\label{EvF}
& LF = (F^{ij}h^k_ih_{kj} - a^2 F^{ij}g_{ij}) (F-f),\\
\label{Doth}
& \dot{h}^j_i = (F-f)^j_i + (F-f) h^k_ih_k^j - (F -f) a^2 \delta_i^j\\
&L h^i_j = (F^{kl}h_{rk}h^r_l + a^2 F^{kl}g_{kl}) h^i_j - (\alpha F^{kl}g_{kl} + f) h^k_jh^i_k \\
&\quad- a^2 (2F + \alpha F^{ij}g_{ij} -f) \delta^i_j + F^{kl,rs}h_{kl;j} h_{rs;}^{\quad i}\\
\label{EvSecFF}
&L h_{ij} = (F^{kl}h_{rk}h^r_l + a^2 F^{kl}g_{kl})h_{ij} - (\alpha F^{kl}g_{kl} + f) h^k_ih_{kj} \\
&\quad + F^{kl,rs}h_{kl;i}h_{rs;j} - a^2 (2 F + \alpha F^{ij}g_{ij} - f) g_{ij} - 2(F-f) h^k_ih_{kj}.
\end{align}
}
\end{Lemma}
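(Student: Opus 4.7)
All the evolution equations follow from the flow equation $\dot x = (f-F)\nu$, the Gau\ss-Weingarten relations, the Codazzi and Gau\ss{} equations, the Ricci identity on $M$, and the fact that the ambient space has constant sectional curvature $-a^2$, so that $\bar R_{\alpha\beta\gamma\delta;\eps} = 0$ and $\bar R_{\alpha\beta\gamma\delta}\nu^\alpha x_i^\beta \nu^\gamma x_j^\delta = -a^2 g_{ij}$. The plan is to derive the identities in the order they are stated, each one building on the previous ones.

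The first block is essentially algebraic. Differentiating $g_{ij} = \langle x_i, x_j\rangle$ along the flow and using the Weingarten equation yields $\dot g_{ij} = -2(F-f)h_{ij}$, and tracing gives $\dot{\sqrt g}$. The identity $\dot u = -v^{-1}(F-f)$ is just the $0$-component of $\dot x$ in geodesic polar coordinates, since $\nu^0 = v^{-1}$. Reparametrizing the flow so that points move purely in the radial direction then produces $\partial u/\partial t = -v(F-f)$, the factor $v$ coming from $\langle \partial_r,\nu\rangle = v^{-1}$. For $Lu$ I would substitute the graph form of the Gau\ss{} formula, which in polar coordinates reduces to $u_{ij} = -v^{-1}h_{ij} + \bar h_{ij}$ with $\bar h_{ij} = \bar H n^{-1}(g_{ij}-u_iu_j)$; contracting with $F^{ij}$ and invoking the homogeneity identity
\begin{equation*}
F^{ij}h_{ij} = F + \alpha F^{ij}g_{ij},
\end{equation*}
which follows from $F = \tilde F\circ \eta_\alpha$ and Euler's relation for the degree-one $\tilde F$, assembles the stated expression. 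The formula for $L\chi$ is then obtained by a chain rule from $\chi = v/\sinh(au)$ once the analogous equation for $v$ is computed; the quadratic piece $-2\chi^{-1}F^{ij}\chi_i\chi_j$ is the standard contribution that arises when rewriting $F^{ij}\chi_{;ij}$ in terms of $F^{ij}u_{;ij}$.

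The second block concerns the second fundamental form. The starting point is
\begin{equation*}
\dot h_{ij} = (F-f)_{;ij} - (F-f)h^k_ih_{kj} - (F-f)a^2 g_{ij},
\end{equation*}
obtained by differentiating $h_{ij} = -\langle x_{;ij}, \nu\rangle$ in time, inserting the flow equation, and picking up the ambient curvature contribution $-a^2 g_{ij}$. Raising an index via $\dot g^{ij} = 2(F-f)h^{ij}$ flips the sign of the $h^k_ih^j_k$ term and yields $\dot h^j_i$. The evolution of $F$ then follows from $\dot F = F^i_j\dot h^j_i$: since $f$ is spatially constant, $(F-f)_{;ij}=F_{;ij}$, and the piece $F^{ij}F_{;ij}$ cancels against itself in $LF = \dot F - F^{ij}F_{;ij}$, leaving $(F-f)(F^{ij}h^k_ih_{kj} - a^2 F^{ij}g_{ij})$.

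Finally, $Lh^i_j$ and $Lh_{ij}$ come from a Simons-type procedure: by Codazzi one rewrites $F^{kl}h_{ij;kl}$ as $F^{kl}h_{kl;ij}$ plus commutators, and the Ricci identity together with the Gau\ss{} equation
\begin{equation*}
R_{ijkl} = h_{ik}h_{jl}-h_{il}h_{jk} + a^2(g_{il}g_{jk}-g_{ik}g_{jl})
\end{equation*}
converts those commutators into exactly the combinations $F^{kl}h_{rk}h^r_l\,h_{ij}$, $a^2 F^{kl}g_{kl}\,h_{ij}$, $-F^{kl}h^k_ih_{kl}$ and $-a^2F^{kl}g_{kl}\,g_{ij}$. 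Writing $F^{kl}h_{kl;ij} = F_{;ij} - F^{kl,rs}h_{kl;i}h_{rs;j}$, combining with $\dot h_{ij}$ and using the homogeneity identity produces $Lh_{ij}$. The mixed version $Lh^i_j$ is related by $Lh_{ij} = g_{jp}Lh^p_i - 2(F-f)h^k_ih_{kj}$, the extra piece coming from the $\dot g_{jp}$ correction when lowering an index through a time derivative; this accounts for the additional $-2(F-f)h^k_ih_{kj}$ term in the $Lh_{ij}$ formula. The main obstacle is purely bookkeeping in this Simons-type computation, where one has to track carefully the terms coming from the two Ricci commutations, from the constant-curvature part of the ambient Riemann tensor, from the global term $f$ (which enters only through the factor $F-f$), and from the $\alpha$-shift in $F = \tilde F\circ\eta_\alpha$. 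Constancy of the ambient sectional curvature guarantees $\bar R_{\alpha\beta\gamma\delta;\eps}=0$, so no gradient-of-curvature terms appear and all formulas close up cleanly.
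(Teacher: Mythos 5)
Your sketch is correct and follows exactly the direct-computation route that the paper delegates to Gerhardt's book (Chapter 2 of \cite{GerhCP}), including the crucial homogeneity identity $F^{ij}h_{ij} = F + \alpha F^{ij}g_{ij}$ that the paper flags as the only non-standard ingredient. In particular your relation $Lh_{ij} = g_{jp}Lh^p_i - 2(F-f)h^k_ih_{kj}$ and the sign of the ambient contribution $\bar R_{\alpha\beta\gamma\delta}\nu^\alpha x_i^\beta\nu^\gamma x_j^\delta = -a^2 g_{ij}$ are both consistent with the stated formulas, so the plan closes up as claimed.
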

\begin{proof}
For a proof see \cite[Chapter 2]{GerhCP}. Note that the curvature functions are homogeneous of degree 1 in $\kappa_i - \alpha$, hence we have $F^{ij}h_{ij} = F + \alpha F^{ij}g_{ij}$. This has to be taken into account for a derivation of the evolution equations.
\end{proof}

\section{Preserved quantities}
In this section we show which quantities are preserved during the flow.

First of all we show that the mixed volume $V_{n+1-k}$ is preserved:
\begin{Lemma}
\label{PresVol}
	The mixed volume $V_{n+1-k}$ is preserved during the flow, i.e. $V_{n+1-k}(M_t) = V_{n+1-k}(M_0)$ for all $t\in [0, T^*)$.
\end{Lemma}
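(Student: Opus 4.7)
The plan is to treat the cases $k=0$, $k=1$, and $k\geq 2$ separately, since $V_{n+1-k}$ has different definitions for $k=0$ versus $k\geq 1$, and $f_k$ has different functional forms for $k\leq 1$ versus $k\geq 2$.

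The cases $k=0$ and $k=1$ are routine. For $k=0$, differentiating the explicit formula for $V_{n+1}(M_t)$ under the integral, invoking \eqref{partialU} together with the graph area element $d\mu_t = v\,a^{-n}\sinh^n(au)\,d\sigma_n$, gives $\frac{d}{dt}V_{n+1}(M_t)=\int_{M_t}(f_0-F)\,d\mu_t = 0$ by the definition of $f_0$. For $k=1$, $V_n(M_t)=\vol(M_t)$ and \eqref{EvDet} directly yields $\frac{d}{dt}V_n(M_t)=-\int_{M_t}(F-f_1)H\,d\mu_t=0$ by the definition of $f_1$.

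The substantive case is $k\geq 2$. Setting $T^i_j:=\partial H_{k-1}/\partial h^j_i$ and combining the evolution equation \eqref{Doth} for $h^j_i$ with \eqref{EvDet}, one obtains
\[
\frac{d}{dt}\int_{M_t}H_{k-1}\,d\mu_t = \int_{M_t}\Bigl[T^{ij}(F-f_k)_{;ij}+(F-f_k)\bigl(T^i_j h^k_i h^j_k - a^2 T^i_i - H\cdot H_{k-1}\bigr)\Bigr]d\mu_t.
\]
The Hessian term vanishes upon integrating by parts twice, since by Lemma \ref{divFree} applied to $F=H_{k-1}$ (admissible because $k-1\geq 1$) the Newton tensor $T^{ij}$ is divergence free in the constant curvature space $\bbh^{n+1}_{1/a}$.

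It remains to evaluate the algebraic combination, which is the principal step. Euler's identity together with Lemma \ref{symPol}(ii) yields the trace formula $T^i_i=\sum_i\partial H_{k-1}/\partial\kappa_i=(n-k+2)H_{k-2}$, while iterating \eqref{DerElemPol} in a frame diagonalizing $h^j_i$ gives
\[
T^i_j h^k_i h^j_k = \sum_i \kappa_i^2\,\frac{\partial H_{k-1}}{\partial \kappa_i} = H\cdot H_{k-1} - k H_k.
\]
After substitution the two $H\cdot H_{k-1}$ contributions cancel, and one arrives at
\[
\frac{d}{dt}\int_{M_t}H_{k-1}\,d\mu_t = \int_{M_t}(f_k - F)\bigl[k H_k + a^2(n-k+2)H_{k-2}\bigr] d\mu_t,
\]
which vanishes by the very definition \eqref{globTerm} of $f_k$. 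The hard part is precisely this algebraic reduction; in particular, the curvature correction $a^2(n-k+2)H_{k-2}$ in \eqref{globTerm}, absent in McCoy's Euclidean formula \cite{McCoyMixedAreaGen}, is exactly dictated by the trace identity $T^i_i=(n-k+2)H_{k-2}$ paired with the $-a^2\delta^j_i$ contribution in \eqref{Doth}.
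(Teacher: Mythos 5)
Your proposal is correct and follows essentially the same route as the paper: the same three-way split by $k$, the same use of \eqref{partialU} and the identity $\sqrt{g}=v\sqrt{\bar g}$ for $k=0$, of \eqref{EvDet} for $k=1$, and for $k\geq 2$ the same combination of \eqref{Doth}, \eqref{EvDet}, the divergence-free property of the Newton tensor (Lemma~\ref{divFree}), and \eqref{DerElemPol} with Euler's identity to produce $kH_k+a^2(n-k+2)H_{k-2}$. The only cosmetic difference is that you write out the trace identities explicitly where the paper leaves them implicit (and one integration by parts suffices for the Hessian term, not two).
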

\begin{proof}
	\begin{enumerate}[(i)]
	\item $k=0$:
	First we observe that for $x\in \bbs^n$ we have 
	\begin{equation}
		\sqrt{g(u(x), x)} = v\sqrt{\det(\bar{g}_{ij}(u(x), x))}.
	\end{equation}
	
	Taking this into account, we have for $k=0$ in view of \eqref{partialU}:
	\begin{equation}
	\begin{split}
		\frac{d}{dt} V_{n+1} &= \int_{\bbs^n}{\frac{\partial u}{\partial t}\,\frac{\sqrt{\bar{g}(u(x), x)}}{\sqrt{\sigma(x)}} \, \mathrm{d\sigma_n(x)}}\\
		&= -\int_{\bbs^n}{(F - f_0) \frac{\sqrt{g(u(x),x)}}{\sqrt{\sigma(x)}}\,\mathrm{d\sigma_n(x)}} = 0,
	\end{split}
	\end{equation}
	in view of the definition of $f_0$. Hence the enclosed volume is preserved by the flow.
	
	\item $k=1$: We have in view of \eqref{EvDet}
	\begin{equation}
	\begin{split}
		n \frac{d}{dt} V_n = \frac{d}{dt} |M_t|
		= -\int_{M_t}{(F - f_1) H\, \mathrm{d\mu_t}} = 0.
	\end{split}
	\end{equation}
	
	\item $1 < k \leq n$: Now we exploit Lemma \ref{divFree} and the identity \eqref{DerElemPol}.
	We get from \eqref{Doth} and \eqref{EvDet}
	\begin{equation}
	\begin{split}
	{n \choose k}&\frac{d}{dt} \int_{M_t}{H_{k-1} \mathrm{d\mu_t}} = \int_{M_t}{(F-f_k)(H_{k-1})^i_jh^j_kh_i^k \mathrm{d\mu_t}} \\
	&- a^2 \int_{M_t}{(F- f_k)(H_{k-1})^i_j\delta_i^j \mathrm{d\mu_t}} -\int_{M_t} (F-f_k) H_{k-1}H \mathrm{d\mu_t}\\
	&= - \int_{M_t}{(F - f_k) \left\{k H_k + a^2 (n-k+2) H_{k-2}\right\}\,\mathrm{d\mu_t}} = 0.
	\end{split}
	\end{equation}
	\end{enumerate}
\end{proof}

Next, we want to prove that a pinching of the principal curvatures of the initial hypersurface is preserved during the flow. First we cite a modification of Hamiltons maximum principle for tensors by Andrews, see \cite[Theorem 3.2]{AndrewsPinching}:

\begin{Theorem}
\label{AndPinch}
	Let $S_{ij}$ be a smooth time-varying symmetric tensor field on a compact manifold $M$ (possibly with boundary), satisfying
	\begin{equation}
		\dot{S}_{ij} = a^{kl}S_{ij;kl} + u^kS_{ij;k} + N_{ij},
	\end{equation}
	where $a^{kl}$ and $u^k$ are smooth and the covariant derivatives are taken with respect to a smooth, possibly time-dependent, symmetric connection and $a^{kl}$ is positive definite everywhere. Suppose that
	\begin{equation}
		N_{ij}v^iv^j + \sup_{\Gamma} 2a^{kl}(2\Gamma^p_kS_{ip;l}v^i - \Gamma^p_k\Gamma^q_l S_{pq}) \geq 0,
	\end{equation}
	whenever $S_{ij} \geq 0$ and $S_{ij}v^j =0$. If $S_{ij}$ is positive definite everywhere on $M$ at time $t=0$ and on $\partial M$ for $0\leq t \leq T$, then it is positive definite on $M\times [0,T]$.
\end{Theorem}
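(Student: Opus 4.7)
I would prove this by the standard perturb-and-contradict argument, a variant of Hamilton's tensor maximum principle. First I would perturb: set $S^{\epsilon}_{ij} := S_{ij} + \epsilon e^{Ct} g_{ij}$ with $C > 0$ chosen large enough that the structural hypothesis on the nonlinearity still holds for $S^{\epsilon}_{ij}$ with the shifted zeroth-order term $N^{\epsilon}_{ij} := N_{ij} + C\epsilon e^{Ct} g_{ij}$. Then $S^{\epsilon}$ is strictly positive definite at $t=0$ and on $\partial M \times [0,T]$. Arguing by contradiction, suppose $S^{\epsilon}$ ceases to be strictly positive definite; by continuity and compactness there exist a first time $t_0 \in (0,T]$ and an interior point $x_0 \in M$ at which $S^{\epsilon}(x_0, t_0)$ is positive semidefinite and admits a unit null eigenvector $v_0$.

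Next, I would extend $v_0$ to a smooth, time-independent vector field $v$ on a neighborhood of $x_0$, leaving its first covariant derivatives $\Gamma^p_k := v^p_{;k}(x_0)$ as free parameters (easily arranged in normal coordinates). Set $P(x,t) := S^{\epsilon}_{ij}(x,t) v^i(x) v^j(x)$; at the first-time failure $(x_0, t_0)$ one has
\begin{equation*}
\partial_t P \leq 0,\qquad P_{;k} = 0,\qquad a^{kl} P_{;kl} \geq 0.
\end{equation*}
Using $S^{\epsilon}_{ij} v^j = 0$ at this point, the vanishing of $P_{;k}$ forces $S^{\epsilon}_{ij;k} v^i v^j = 0$, which kills the drift contribution $u^k S^{\epsilon}_{ij;k} v^i v^j$. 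Expanding $a^{kl} P_{;kl}\geq 0$ and dropping the $v^i_{;kl}$ term (again by $S^{\epsilon}v=0$), a short index computation gives
\begin{equation*}
a^{kl} S^{\epsilon}_{ij;kl} v^i v^j \;\geq\; -2 a^{kl}\bigl(2\Gamma^p_k S^{\epsilon}_{ip;l} v^i + \Gamma^p_k \Gamma^q_l S^{\epsilon}_{pq}\bigr).
\end{equation*}
Since the extension, hence $\Gamma$, is arbitrary, replacing $\Gamma$ by $-\Gamma$ and taking the supremum -- finite because $-\Gamma^p_k\Gamma^q_l S^{\epsilon}_{pq}$ is non-positive -- yields
\begin{equation*}
a^{kl} S^{\epsilon}_{ij;kl} v^i v^j \;\geq\; \sup_{\Gamma} 2 a^{kl}\bigl(2\Gamma^p_k S^{\epsilon}_{ip;l} v^i - \Gamma^p_k \Gamma^q_l S^{\epsilon}_{pq}\bigr).
\end{equation*}

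Inserting this bound into the evolution equation for $S^{\epsilon}$ and combining with $\partial_t P \leq 0$ produces, at $(x_0, t_0)$,
\begin{equation*}
0 \;\geq\; N^{\epsilon}_{ij} v_0^i v_0^j + \sup_{\Gamma} 2 a^{kl}\bigl(2\Gamma^p_k S^{\epsilon}_{ip;l} v_0^i - \Gamma^p_k \Gamma^q_l S^{\epsilon}_{pq}\bigr).
\end{equation*}
By the hypothesis, applied to the positive semidefinite tensor $S^{\epsilon}$ with null vector $v_0$, the analogous expression is non-negative, while the perturbation contributes the strictly positive term $C\epsilon e^{Ct_0} g_{ij} v_0^i v_0^j > 0$ on top. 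This is a strict contradiction. Hence $S^{\epsilon}$ is positive definite on all of $M\times[0,T]$ for every admissible $\epsilon>0$, and letting $\epsilon\downarrow 0$ recovers the conclusion for $S_{ij}$.

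The main obstacle is verifying that the structural condition on the nonlinearity transfers cleanly from $(S, N)$ to $(S^{\epsilon}, N^{\epsilon})$ with a single uniform $C$, because both the linear and the quadratic terms in $\Gamma$ depend on the tensor being tested; the extra $C\epsilon e^{Ct} g_{ij}$ in $N^{\epsilon}$ must dominate the perturbation of the $\Gamma$-dependent terms. Everything else -- the freedom to prescribe $\Gamma^p_k$ via the extension of $v$, the expansion of $P_{;kl}$ using the symmetry $S^{\epsilon}_{ij}=S^{\epsilon}_{ji}$, and the index bookkeeping to match the precise formulation stated -- is routine.
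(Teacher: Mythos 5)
The paper does not prove this theorem; it is cited verbatim from Andrews, \cite[Theorem~3.2]{AndrewsPinching}. So I am judging your argument on its own merits.

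Your skeleton is the right one: perturb, locate the first interior space-time point where the perturbed tensor degenerates, extend the null eigenvector $v_0$ to a vector field whose first covariant derivatives $\Gamma^p_k$ at $x_0$ are at your disposal, and run the second-order maximum principle. The index bookkeeping is correct: using $S^\epsilon v_0=0$ you get $P_{;k}=S^\epsilon_{ij;k}v^iv^j=0$ (this, incidentally, is also why the $\sup_\Gamma$ is finite, not because $-\Gamma\Gamma S^\epsilon\leq 0$ alone), and the expansion of $a^{kl}P_{;kl}\geq 0$ together with the substitution $\Gamma\mapsto -\Gamma$ gives exactly the $\sup_\Gamma$ lower bound on $a^{kl}S^\epsilon_{ij;kl}v^iv^j$.

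The genuine gap is the step ``by the hypothesis, applied to the positive semidefinite tensor $S^\epsilon$ with null vector $v_0$.'' The hypothesis is a statement about the evolving tensor $S$ along the flow: it gives you information at $(x,t)$ only when $S(x,t)\geq 0$ and $S(x,t)v=0$. At your contact point $(x_0,t_0)$ for $S^\epsilon$ you have $S^\epsilon(x_0,t_0)v_0=0$, hence $S(x_0,t_0)_{ij}v_0^iv_0^j=-\epsilon e^{Ct_0}|v_0|^2<0$: the unperturbed $S$ is \emph{not} positive semidefinite there. Consequently the hypothesis is vacuous at $(x_0,t_0)$ and furnishes no inequality to which you could add the perturbation term $C\epsilon e^{Ct}g_{ij}v_0^iv_0^j$. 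Your closing paragraph frames this as a quantitative issue of choosing $C$ large enough to dominate the $\Gamma$-dependent perturbations, but that misdiagnoses the problem: there is nothing to dominate, because you never obtain the base inequality.

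To close the gap you would need to strengthen the hypothesis to a structural one, for instance: $N$ is a locally Lipschitz function of $S$ and its first derivatives, and the null-eigenvector condition is required for \emph{all} nonnegative symmetric $2$-tensors $T$ with $Tv=0$ in a neighbourhood of the range of $S$, not just for the particular tensor $S$ arising along the evolution. That is exactly the shape of Hamilton's original formulation, and under it your perturbation argument goes through (one estimates $N(S)v_0v_0\geq N(S^\epsilon)v_0v_0 - L\epsilon e^{Ct}$ by the Lipschitz bound and then chooses $C>L$). Alternatively, one must avoid perturbing $S$ at all and argue for a contradiction directly at the first degeneracy point of $S$ itself, which then requires more than the one-shot maximum-principle comparison you set up, since that only yields equalities. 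As written, the proof is incomplete.
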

In the paper cited above, roughly said, Andrews uses this maximum principle to derive that a certain curvature pinching for closed hypersurfaces in $\bbr^{n+1}$ is preserved for curvature functions that are both concave and inverse concave (and satisfy the other conditions of assumption \ref{MainAssumption} apart from the convexity). To do so, he needs another important Theorem, which holds for such curvature functions, namely \cite[Theorem 4.1]{AndrewsPinching}. We need a slightly generalized version of this Theorem to apply Theorem \ref{AndPinch} to obtain the preservation of a curvature pinching in our situation. The proof is identical to the one of \cite[Theorem 4.1]{AndrewsPinching}, we only need a minor observation at the beginning of the proof.

\begin{Theorem}
\label{AndPinchLem}
	Let $\alpha \in \bbr_+$. Let $F$ be a smooth, symmetric, monotone, concave and inverse-concave curvature function defined on 
	\begin{equation}
		\Gamma_\alpha := \{\lambda = (\lambda_i) \in \bbr^n: \lambda_k > \alpha \,\forall\, k \in \{1, \ldots, n\}\}.
	\end{equation}	
	Let $A$ be a symmetric 2-Tensor with eigenvalues in $\Gamma_\alpha$ and $v$ an eigenvector of $A$ corresponding to the smallest eigenvalue of $A$. Let $\tilde{A} := A - \alpha I$, where $I$ is the identity matrix, and let $\eps := \frac{\tilde{A}_{ij}v^iv^j}{\Tr \tilde{A} |v|^2} \in (0, \frac{1}{n})$. If $T$ is a totally symmetric $3$-tensor with $T_{ijk}v^iv^j = \eps\,\delta^{ij}T_{ijk}$ for ${k=1, \ldots, n}$, then
	\begin{equation}
	\begin{split}
		\beta &:= v^iv^jF^{kl,pq}(A) T_{ikl}T_{jpq} - \eps |v|^2 \delta^{ab}F^{kl,pq}(A)T_{akl}T_{bpq}\\
		&+ 2 \sup_{\Gamma} F^{kl}(A)\left( 2 \Gamma^p_k(T_{lpi}v^i - \eps\delta^{ab}T_{lab}v_p) - \Gamma^p_k\Gamma^q_l(\tilde{A}_{pq} - \eps \Tr \tilde{A} \delta_{pq}\right) \geq 0.
	\end{split}
	\end{equation}
\end{Theorem}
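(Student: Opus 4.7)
The plan is to reduce the statement directly to Andrews' original result \cite[Theorem 4.1]{AndrewsPinching} via the translation $\tilde{A} := A - \alpha I$ that is already built into the definition $F = \tilde{F}\circ \eta_\alpha$ from Assumption \ref{MainAssumption}. Because $\eta_\alpha$ is an affine shift, the chain rule immediately gives
\begin{equation*}
F^{kl}(A) = \tilde{F}^{kl}(\tilde{A}), \qquad F^{kl,pq}(A) = \tilde{F}^{kl,pq}(\tilde{A}),
\end{equation*}
so every derivative of $F$ appearing in $\beta$ may be replaced by the corresponding derivative of $\tilde{F}$ evaluated at $\tilde{A}$. This is exactly the ``minor observation at the beginning of the proof'' announced in the preceding paragraph.

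Next I would check that the remaining hypotheses transfer to $(\tilde{F}, \tilde{A})$. Since $A$ and $\tilde{A}$ differ only by a multiple of the identity, $v$ is an eigenvector of $A$ for its smallest eigenvalue if and only if $v$ is an eigenvector of $\tilde{A}$ for its smallest eigenvalue, and the eigenvalues of $A$ lie in $\Gamma_\alpha$ precisely when those of $\tilde{A}$ lie in $\Gamma_+$. The pinching ratio
\begin{equation*}
\eps = \frac{\tilde{A}_{ij}v^iv^j}{\Tr \tilde{A}\,|v|^2}
\end{equation*}
is by construction the exact quantity required by Andrews, the admissibility condition $T_{ijk}v^iv^j = \eps\,\delta^{ij}T_{ijk}$ on the totally symmetric $3$-tensor $T$ is identical, and $\tilde{F}$ is by hypothesis symmetric, strictly monotone, concave, and inverse-concave on $\Gamma_+$. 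Therefore the full hypothesis of \cite[Theorem 4.1]{AndrewsPinching} is satisfied for the pair $(\tilde{F}, \tilde{A})$, and after the above rewriting the quantity $\beta$ coincides with the one Andrews bounds.

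The conclusion $\beta \geq 0$ then follows by quoting Andrews' theorem verbatim. I would not reproduce his argument, whose substance is the decomposition of $\tilde{F}^{kl,pq}(\tilde{A})$ in a basis diagonalizing $\tilde{A}$ into a diagonal part (estimated by concavity of $\tilde{F}$) and an off-diagonal part (estimated via inverse-concavity of $\tilde{F}$, which controls the sign of $(\tilde{F}_p - \tilde{F}_q)/(\lambda_p - \lambda_q)$ with the correct weight), the two estimates then being combined with the algebraic cancellation forced by $T_{ijk}v^iv^j = \eps\,\delta^{ij}T_{ijk}$. There is no genuine obstacle in the reduction; the only thing to be careful about is that the affine shift $\eta_\alpha$ preserves every hypothesis and every derivative literally, which is what the first two paragraphs accomplish.
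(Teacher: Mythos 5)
Your reduction is correct and is really the same strategy the paper uses---both proofs transfer Andrews' \cite[Theorem~4.1]{AndrewsPinching} along the affine shift $\eta_\alpha$---but your execution is cleaner. The paper re-derives the opening of Andrews' argument on $\Gamma_\alpha$: it first notes that \cite[Corollary~5.5]{AndrewsPinching} (the equivalence of inverse-concavity in the eigenvalues and in the matrix) remains valid when the domain is $\Gamma_\alpha$, then reduces to distinct eigenvalues by upper semicontinuity of $\beta$ in $A$, diagonalizes, observes that $\tilde A_{pq}-\eps\Tr\tilde A\,\delta_{pq}=(\lambda_p-\lambda_1)\delta_{pq}$, and re-computes the maximizing $\Gamma^p_k=T_{kp1}/(\lambda_p-\lambda_1)$ before declaring the rest identical to Andrews'. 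Your route bypasses all of this: passing to $(\tilde F,\tilde A)$ and using $F^{kl}(A)=\tilde F^{kl}(\tilde A)$, $F^{kl,pq}(A)=\tilde F^{kl,pq}(\tilde A)$ turns $\beta$ and every hypothesis verbatim into those of Andrews' Theorem~4.1 on $\Gamma_+$, so even the paper's ``minor observation'' about Corollary~5.5 on $\Gamma_\alpha$ becomes unnecessary, since the original corollary applies to $\tilde F$ directly. The one point worth stating explicitly is the reading of the hypothesis: ``$F$ concave and inverse-concave on $\Gamma_\alpha$'' must be understood in the sense of Assumption~\ref{MainAssumption}, i.e.\ $F=\tilde F\circ\eta_\alpha$ with $\tilde F$ concave and inverse-concave on $\Gamma_+$; you assume this silently when asserting that $\tilde F$ is ``by hypothesis'' inverse-concave, and this is exactly the interpretation the paper relies on when it applies the theorem in Lemma~\ref{PresPinch}.
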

\begin{proof}
	Firstly, we note that \cite[Corollary 5.5]{AndrewsPinching} remains valid if $\Omega = \Gamma_\alpha$, hence the inverse of the curvature function is concave as a function of the principal curvatures if and only if it is concave as a function of the second fundamental form.
	For fixed $v$ and $T$ the quantity $\beta$ is upper semi-continuous in $A$, since $F$ is smooth. This allows us to assume that all eigenvalues of $A$ are distinct, since otherwise we can take a sequence $\{A^{(k)}\}_{k\geq 0}$ with $A^{(k)} \rightarrow A$ for $k\rightarrow \infty$, $\tilde{A}_{ij}{(k)} := A^{(k)}_{ij} - \alpha \delta_{ij}$, $\tilde{A}^{(k)}_{ij} \geq \eps \Tr \tilde{A}^{(k)} \delta_{ij}$ and $\tilde{A}^{(k)}_{ij}v^iv^j = \eps \Tr \tilde{A}^{(k)} |v|^2$, such that the eigenvalues of each $A^{(k)}$ are distinct. 
	
	Let us take an orthonormal basis $e_1, \ldots, e_n$ of eigenfunctions of $A$, with eigenvalues in increasing order. Then we have in this basis $v=e_1$, $A= \diag(\lambda_1, \ldots, \lambda_n)$, $F^{ij} = f^i\delta^{ij}$ and $\lambda_1 = \alpha - n\alpha\eps + \eps H$. We observe that the last identity implies 
	\begin{equation}
		\tilde{A}_{pq} - \eps \Tr \tilde{A}\delta_{pq} =  \lambda_p\delta_{pq} - \lambda_1\delta_{pq}.
	\end{equation}
	
	Now we can explicitly determine the $\Gamma$ at which the supremum in $\beta$ is attained. 
	\begin{equation}
	\begin{split}
		2ÊF^{kl}&\left((2 \Gamma_k^p(T_{kpi}v^i - \eps T_{laa}v_p) - \Gamma_k^p\Gamma_l^q(\tilde{A}_{pq} - \eps \Tr \tilde{A} \delta_{pq})\right)\\
		&= 2 \sum_{k=1}^n\sum_{p=2}^n f^k\left(2\Gamma_k^pT_{kp1} - (\Gamma_k^p)^2 (\lambda_p - \lambda_1)\right))\\
		&= 2\sum_{k\geq 1, p\geq 2}\left(\frac{f^k}{\lambda_p-\lambda_1}- f^k(\lambda_p - \lambda_1)\left(\Gamma_k^p - \frac{T_{kp1}}{\lambda_p - \lambda_1}\right)^2\right).
	\end{split}
	\end{equation}
	Hence it follows, that the supremum is attained by the choice $\Gamma^p_k := \frac{T_{kp1}}{\lambda_p - \lambda_1}$. At this point we are in the exact same situation as in \cite[Theorem 4.1]{AndrewsPinching} and the rest of the proof remains the same.
\end{proof}

The preceding two Theorems allow us to prove the pinching estimate for our flow:

\begin{Lemma}
\label{PresPinch}
Let $\eps>0$ be a constant such that we have $\kappa_1 - a\geq \tilde{\eps} (H- n a)$ for all $x\in M_0$ in the case of concave curvature functions and $\kappa_1 - a\geq \eps (F - (a-\alpha))$ for all $x\in M_0$ in the case of convex curvature functions, where $\kappa_1$ denotes the smallest principal curvature of $M_0$ at $x$. Let us assume that the hypersurfaces $M_t$ remain strictly $h$-convex for $t\in [0, T^*)$. Let $\epst = \frac\eps n$. Then for every $t\in [0, T^*)$ and $x \in M_t$ there holds
\begin{align}
& \kappa_1 - a \geq \eps(F-(a-\alpha)),\\
\label{PresPinchEq}
&\kappa_1 - a\geq \tilde \eps (H- na),
\end{align}
where $\kappa_1$ denotes the smallest principal curvature of $M_t$ at $x$.
\end{Lemma}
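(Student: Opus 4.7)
The plan is to recast each pinching inequality as the statement that a suitable symmetric tensor is positive semi-definite, and then apply Andrews' tensor maximum principle (Theorem \ref{AndPinch}). In the convex case set $\psi := a + \eps(F - (a - \alpha))$, in the concave case $\psi := a + \tilde\eps(H - na)$, and in both cases $S_{ij} := h_{ij} - \psi g_{ij}$. The lemma's conclusion is equivalent to $S_{ij}\geq 0$, which holds at $t=0$ by hypothesis.

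Using the evolution equations for $h_{ij}$, $g_{ij}$, $F$, and (in the concave case) $H$ collected in Section 3, a direct computation yields $L S_{ij} = L h_{ij} - (L\psi)\, g_{ij} + 2\psi(F-f)h_{ij}$, since $g_{ij;kl} = 0$. Hence $\dot S_{ij} = F^{kl}S_{ij;kl} + N_{ij}$ with $N_{ij} := L S_{ij}$, placing us in the setting of Theorem \ref{AndPinch} with $a^{kl} = F^{kl}$ and $u^k = 0$. A crucial cancellation occurs at any null eigenvector $v$ of $S_{ij}$: there $h^i_j v^j = \psi v^i$, so $h^k_i h_{kj}v^iv^j = \psi^2|v|^2 = \psi\,h_{ij}v^iv^j$, and the contributions $-2(F-f)h^k_i h_{kj}$ in $L h_{ij}$ are exactly offset by $2\psi(F-f)h_{ij}$ after contracting with $v^iv^j$.

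In the convex case, the second-derivative term $F^{kl,rs}(h_{kl;i}v^i)(h_{rs;j}v^j)$ is already nonnegative by convexity, so one may drop the $\sup_\Gamma$ term in Theorem \ref{AndPinch} and it remains to verify that the residual zeroth-order expression in $h^i_j$, $F^{kl}g_{kl}$, $F$, $f$, $a$ and $\psi$ is nonnegative; this follows from the homogeneity identity $F^{kl}h_{kl} = F + \alpha F^{kl}g_{kl}$, the eigenvalue relation $\kappa_1 = \psi$, and the trace estimate $F \geq (H-n\alpha)/n$ (coming from \eqref{FHIneq} applied to $\tilde F$). The concave and inverse-concave case is more delicate because $F^{kl,rs}T_{kl}T_{rs}\leq 0$ now threatens the inequality; however, the extra $-L\psi\, g_{ij}$ piece of $N_{ij}$ supplies, through the $F^{kl,rs}$ term appearing in $LH$, precisely the corrective contribution $-\tilde\eps|v|^2\,\delta^{ab}F^{kl,rs}h_{kl;a}h_{rs;b}$. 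Setting $T_{abc} := h_{ab;c}$ (totally symmetric by Codazzi), one then verifies that $N_{ij}v^iv^j + 2\sup_\Gamma F^{kl}(\cdots)$ equals $\beta + (\text{algebraic remainder})$, where $\beta$ is precisely the quantity of Theorem \ref{AndPinchLem} applied with $\alpha=a$, pinching constant $\tilde\eps$, and $T$ as above. The constraint $h_{11;k} = \tilde\eps H_{;k}$ needed by Theorem \ref{AndPinchLem} is exactly the condition under which the $\sup_\Gamma$ in Andrews' principle is finite (otherwise the desired inequality is trivially satisfied), and strict $h$-convexity of $M_t$ places the eigenvalues of $h$ in $\Gamma_a$, so Theorem \ref{AndPinchLem} applies and delivers $\beta \geq 0$.

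The main obstacle is the bookkeeping needed to verify nonnegativity of the algebraic remainder: once the gradient terms have been absorbed as above, one is left with a polynomial in $\kappa_1,\ldots,\kappa_n$, $F^{kl}g_{kl}$, $F$, $f$ and $a$ whose positivity must be extracted from the saturation relation $\kappa_1 = \psi$, the homogeneity of $F$, and the trace inequalities \eqref{FHIneq}--\eqref{FGIneq}. Once the primary inequality has been established in each case, the companion form of the pinching follows automatically: for convex $F$, the estimate $F \geq (H-n\alpha)/n$ gives $\eps(F-(a-\alpha)) \geq \tilde\eps(H-na)$ with $\tilde\eps = \eps/n$, while for concave $F$ the opposite inequality supplies the missing direction. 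Theorem \ref{AndPinch} then concludes that $S_{ij}\geq 0$ is preserved on $[0,T^*)$, which is the claimed pinching estimate.
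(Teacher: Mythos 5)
Your proposal takes essentially the same route as the paper's proof: you define the same tensors $S_{ij}$, invoke Theorem~\ref{AndPinch}, exploit convexity of $F$ to discard the second-derivative and $\sup_\Gamma$ terms in the first case and apply Theorem~\ref{AndPinchLem} (with shift parameter $a$) in the concave/inverse-concave case, and derive the companion inequality from \eqref{FHIneq}. The only difference is that you assert rather than compute the nonnegativity of the residual zeroth-order expression; the paper carries this bookkeeping out explicitly, using in particular $\sum_{i} f_i \leq 1$ from \eqref{FGIneq} and $f\geq a-\alpha$, to arrive at \eqref{PinchMainConv} and \eqref{PinchMainConc}.
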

\begin{proof}
We need to distinguish between convex and concave curvature functions:
\begin{enumerate}[a)]
	\item Firstly, we assume $F$ to be a convex curvature function.
		
	Let $S_{ij} = h_{ij} - (a + \eps\, (F - (a-\alpha))) g_{ij}$. Then we have
	\begin{equation}
	\begin{split}
		\dot{S}_{ij} &- F^{kl}S_{ij;kl} = (F^{kl}h^r_kh_{rl} + a^2 F^{kl}g_{kl})h_{ij} - (f + \alpha F^{kl}g_{kl})  h_i^rh_{rj}\\
		& - a^2 (2F + \alpha F^{kl}g_{kl} - f) g_{ij} +F^{kl,rs}h_{kl;i}h_{rs;j} - 2 ( F - f ) h^k_i S_{kj} \\
		&- \eps(F^{kl}h^r_kh_{rl} - a^2 F^{kl}g_{kl})(F-f) g_{ij}. 
	\end{split}
	\end{equation}
	Denote the right hand side by $N_{ij}$. Let $0<t_0<T^*$ and $x_0 \in M_{t_0}$ be such that at $x_0$ there holds $S_{ij}\geq 0$ and there exists a normalized null eigenvector $v$ for $(S_{ij})$, i.e. $S_{ij}v^j = 0$ and $|v|^2 =1$. We introduce Riemannian normal coordinates at $x_0$ such that the principal curvatures at $x_0$ are monotonically ordered, $\kappa_1 \leq \kappa_2 \leq \ldots \leq \kappa_n$. Note that we have $\kappa_1 = h_{ij} v^iv^j$. At $x_0$ there holds due to the convexity and homogeneity of $F$
	\begin{equation}
	\begin{split}
		N_{ij}v^i v^j &\geq F^{kl}h^r_kh_{rl} (h_{ij}v^iv^j - \eps F ) + a^2 F^{kl}g_{kl} (h_{ij}v^iv^j + \eps FÊ) - \alpha a^2 F^{kl}g_{kl} \\
		& - 2 a^2 F - \alpha F^{kl}g_{kl} \kappa_1^2 + f \left(-\kappa_1^2 + (\eps F^{kl}h^r_kh_{rl} - a^2 \eps F^{kl}g_{kl} + a^2) \right)\\
		&= ((1-\eps) a + \eps \alpha) F^{kl}h^r_kh_{rl} + a^2 F^{kl}g_{kl} (2\eps F + (1-\eps) a + \eps \alpha)-2 a^2 F^{kl}h_{kl} \\
		& + \alpha (a^2-\kappa_1^2) F^{kl}g_{kl} + f \left[ -\kappa_1^2 + \eps F^{kl}h^r_kh_{rl} + a^2 (1-\eps F^{kl}g_{kl})\right].
	\end{split}
	\end{equation}
	The part in the square brackets is non-negative:
	\begin{equation}
	\begin{split}
		& a^2-\kappa_1^2 + \eps \sum_{i=1}^n f_i (\kappa_i^2 - a^2) \\
		&\geq -\eps (F - (a-\alpha)) (a+\kappa_1) + \eps (a+\kappa_1) \sum_{i=1}^n f_i (\kappa_i - \alpha + \alpha - a)\\
		& = \eps (a+\kappa_1)\left(-F + a - \alpha + F - \sum_{i=1}^nf_i (a-\alpha)\right) \geq 0,
	\end{split}
	\end{equation}	
	since we have $\sum_{i=1}^nf_i \leq 1$ in view of inequality \eqref{FGIneq}.
	A short computation also yields
	\begin{equation}
		a^2-\kappa_1^2 = -2\eps a(F-(a+\alpha)) - \eps^2 (F- (a-\alpha))^2.
	\end{equation}
	Since $f\geq a-\alpha$, we obtain
	\begin{equation}
	\begin{split}
		N_{ij}&v^iv^j \geq a \sum_{i=1}^n f_i\kappa_i^2 - 2 a^2 \sum_{i=1}^nf_i\kappa_i + a^3 \sum_{i=1}^n f_i\\
		&+ (a^2-\kappa_1^2)(a-\alpha + \alpha\sum_{i=1}^n f_i) + 2a^2\eps(F-(a-\alpha))  \sum_{i=1}^n f_i \\
		&= a\sum_{i=1}^nf_i(\kappa_i - a)^2 - \eps^2 (F- (a-\alpha))^2 \left(a - \alpha\left(1-\sum_{i=1}^n f_i\right)\right)\\
		& - 2\eps a (a-\alpha) (F-(a-\alpha)) \left(1- \sum_{i=1}^nf_i\right).
	\end{split}
	\end{equation}
	With
	\begin{equation}
	\sum_{i=1}^n f_i (\kappa_i - a)^2 = \sum_{i=1}^n f_i(\kappa_i-\kappa_1)^2 + 2\sum_{i=1}^n f_i(\kappa_i-\kappa_1)(\kappa_1-a) + \sum_{i=1}^n f_i (\kappa_1 - a)^2
	\end{equation}
	and
	\begin{equation}
	\begin{split}
		\sum_{i=1}^n& f_i (\kappa_i - \kappa_1) (\kappa_1 - a) = (\kappa_1 - a) \left(F - (\kappa_1-\alpha) \sum_{i=1}^n f_i\right)\\
		& = \eps(F - (a-\alpha)) \left((a-\alpha) \left(1-\sum_{i=1}^nf_i\right) \right.\\
		&\left.+ (1-\eps) (F-(a-\alpha)) + \eps (F-(a-\alpha)) \left(1 - \sum_{i=1}^nf_i\right) \right)
	\end{split}
	\end{equation}
	we derive
	\begin{equation}
	\label{PinchMainConv}
	\begin{split}
		N_{ij}&v^iv^j \geq 2a\eps (1-\eps) (F-(a-\alpha))^2 \geq 0.
	\end{split}
	\end{equation}	
	An application of Theorem \ref{AndPinch} and \eqref{FHIneq} finishes the proof in the case of convex curvature functions.

	\item	Next, we assume $F$ to be concave and inverse concave.
	
	Let $S_{ij} = h_{ij} + (-a - \epst H + \epst a\, n)g_{ij}$.
	
	Then $S_{ij}$ satisfies the following evolution equation:
	\begin{equation}
	\begin{split}
		\dot{S}_{ij} &- F^{kl}S_{ij;kl} = (F^{kl}h_{rk}h^r_l + a^2F^{kl}g_{kl}) (h_{ij} - \epst H g_{ij}) \\
	&+ (f + \alpha F^{kl}g_{kl}) (\epst|A|^2g_{ij} - h^k_ih_{kj}) + F^{kl,rs}h_{kl;p}h_{rs;q}(\delta^p_i\delta^q_j - \epst g^{pq}g_{ij})\\
	&- a^2 (2F + \alpha F^{kl}g_{kl} -f) (1- \epst n) g_{ij}
- 2(F-f) h^k_iS_{kj}.
	\end{split}
	\end{equation}
	We denote the right hand side of this equation by $N_{ij}$. Now we want to use Theorem \ref{AndPinch} to obtain \eqref{PresPinchEq}. 
	
	Let $0<t_0<T^*$ and $x_0 \in M_{t_0}$ be such that at $x_0$ there holds $S_{ij}\geq 0$ and there exists a normalized null eigenvector $v$ for $(S_{ij})$, i.e. $S_{ij}v^j = 0$ and $|v|^2 =1$. We introduce Riemannian normal coordinates at $x_0$ such that the principal curvatures at $x_0$ are monotonically ordered, $\kappa_1 \leq \kappa_2 \leq \ldots \leq \kappa_n$. Note that we have $\kappa_1 = h_{ij} v^iv^j$. Using Theorem \ref{AndPinchLem} we only need to show that the remaining terms in $N_{ij}v^iv^j$ are non-negative:
	\begin{equation}
	\begin{split}
		&(F^{kl}h_{rk}h^r_l + a^2 F^{kl}g_{kl}) (h_{ij} v^iv^j - \epst H) - a^2 (2F+\alpha F^{kl}g_{kl})\,(1-\epst n)\\ 
		&+\alpha F^{kl}g_{kl}(\epst|A|^2- \kappa_1^2)+ f(\epst |A|^2  - h^k_ih_{kj}v^iv^j + a^2 (1-\epst n))\\ 
		& -2(F-f)h_i^kS_{kj}v^iv^j\\
		&\geq (f+ \alpha F^{kl}g_{kl}) (\epst |A|^2 + a^2 (1-\epst n) - ((1-\epst n) a + \epst H)^2)\\
		& \quad + (F^{kl}h_{rk}h^r_l + a^2 F^{kl}g_{kl} - 2a \, F^{kl}h_{kl})\,a\,(1-\epst n).
	\end{split}
	\end{equation}
	The terms involving $(f+\alpha F^{kl}g_{kl})$ are positive as can be seen by using the binomial inequality and noting that the hypersurface is strictly convex at $x_0$; we only consider the terms in the brackets:
	\begin{equation}
	\label{estimatePinch}
	\begin{split}
		&\epst |A|^2 + a^2 (1-\epst n)- a^2 (1-\epst n)^2 - \epst^2 H^2 - 2 \epst a(1-\epst n)H \\
		&\geq \epst H^2\left(\frac{1}{n} - \epst - \frac{1-\epst n}{n}\right) + a^2 (1-\epst n) \left(1 - (1- \epst n) - \epst n\right) = 0.
	\end{split}
	\end{equation} 
	The remaining terms are positive, since they can be expressed as
	\begin{equation}
	\begin{split}
		a(1-\epst n)&\sum_{i=1}^n f_i (\kappa_i^2 + a^2 -2 a\kappa_i) = a(1-\epst n) \sum_i f_i \left(\kappa_i - a\right)^2\\
		&\geq a(1-\epst n) \epst^2 (H- an)^2 \sum_{i=1}^n f_i \geq 0.
	\end{split}
	\end{equation}
	Hence we obtain
	\begin{equation}
		\label{PinchMainConc}
		N_{ij}v^i v^j \geq \epst^2 (1-\epst n) a (H- an)^2 \sum_{i=1}^n f_i \geq 0.
	\end{equation}
\end{enumerate}
\end{proof}

Curvature pinching has an important consequence, which follows from the fact that our curvature functions are homogeneous of degree $1$ and hence the derivative of the curvature function is homogeneous of degree $0$:

\begin{Korollar}
\label{UniformlyParabolic}
There exists a constant $c_0> 0$ depending only on $n, M_0$ and the curvature function, such that for every $t \in [0, T^*)$ and $x \in M_t$ there holds 
\begin{equation}
c_0^{-1} \delta^i_j \leq F^i_j((h^k_l)(x)) \leq c_0 \delta^i_j
\end{equation} 
holds as long as the hypersurfaces $M_t$ are strictly $h$-convex.
\end{Korollar}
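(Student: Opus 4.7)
The plan is to combine the zero-homogeneity of the derivatives of $\tilde F$ with the pinching estimates of Lemma \ref{PresPinch}, which together confine the (shifted) principal curvature direction to a fixed compact subset of the positive cone; on such a subset the partial derivatives $\tilde F_{,i}$ are automatically bounded above and away from zero.

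First I would recall that $F = \tilde F \circ \eta_\alpha$, and that by Assumption \ref{MainAssumption} the function $\tilde F$ is smooth, strictly increasing, positive, and homogeneous of degree one on $\Gamma_+$. Consequently each $\tilde F_{,i}$ is continuous, strictly positive, and homogeneous of degree zero on $\Gamma_+$. In a principal curvature frame the mixed tensor $F^i_j$ is diagonal with eigenvalues $\tilde F_{,i}(\mu)$, where $\mu_i := \kappa_i - \alpha$; the two-sided bound to be proved is therefore equivalent to uniform bounds on these eigenvalues, and it suffices to produce a compact set $K \subset \Gamma_+ \cap \bbs^{n-1}$, depending only on $n$, $M_0$, and $F$, such that $\mu/|\mu| \in K$ for every $x \in M_t$ and every $t \in [0, T^*)$.

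I would then order the principal curvatures $\kappa_1 \leq \ldots \leq \kappa_n$ and treat the two cases of Assumption \ref{MainAssumption} separately. In the concave and inverse concave case the estimate \eqref{PresPinchEq} reads $\kappa_1 - a \geq \epst \sum_j (\kappa_j - a)$; combined with $h$-convexity $\kappa_j \geq a$ this yields $\kappa_j - a \leq \epst^{-1}(\kappa_1 - a)$ and hence (using the implicit $a > \alpha$ that is built into the framework) $\mu_j/\mu_1 \leq 1 + \epst^{-1}$. In the convex case the pinching $\kappa_1 - a \geq \eps(F - (a - \alpha))$, combined with the convexity bound \eqref{FHIneq} (which with $\tilde F(1,\ldots,1)=1$ gives $F = \tilde F(\mu) \geq n^{-1}\sum_j \mu_j$) and with $\kappa_j \geq a$, yields $\mu_1 \geq (1-\eps)(a-\alpha) + (\eps/n)\sum_j \mu_j \geq (\eps/n)\sum_j \mu_j$, whence $\mu_j/\mu_1 \leq n/\eps$. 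In either situation the unit vector $\mu/|\mu|$ sits in $K := \{\omega \in \bbs^{n-1} : \omega_i \geq C^{-1}\omega_j \text{ for all } i,j\}$ for a constant $C$ depending only on $n$, $\eps$, $\epst$, and $K$ is a compact subset of $\Gamma_+ \cap \bbs^{n-1}$.

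To finish, each $\tilde F_{,i}$ is continuous and strictly positive on the compact set $K$, hence bounded above and below there by positive constants; by homogeneity of degree zero these bounds extend to all $\mu \in \Gamma_+$ with $\mu/|\mu| \in K$, giving $c_0^{-1} \leq \tilde F_{,i}(\mu) \leq c_0$. In frame-free language this is precisely the claimed estimate $c_0^{-1}\delta^i_j \leq F^i_j \leq c_0 \delta^i_j$. The only non-routine point is packaging the pinching inequalities into a ratio bound $\mu_j/\mu_1 \leq C$; once this is done the remainder is a pure compactness argument and the constant $c_0$ obtained depends only on $n$, $M_0$, and $F$ as required.
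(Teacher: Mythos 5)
Your proof is correct and takes essentially the same approach as the paper: the paper's proof is a one-liner citing Andrews' Euclidean result (Corollary 4.6 of the cited reference), with the observation that the shifted curvatures $\kappa_i - \alpha$ play the role of the principal curvatures there. Your write-up fills in the details behind that citation — the pinching estimates of Lemma \ref{PresPinch} confine $\mu/|\mu|$ to a fixed compact subset of $\Gamma_+ \cap \bbs^{n-1}$, and degree-zero homogeneity plus continuity of $\tilde F_{,i}$ then yield the two-sided bound.
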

\begin{proof}
	We can argue exactly as in \cite[Corollary 4.6]{AndrewsContractionEuc}, only we define $\lambda_i := \kappa_i - \alpha$ for $i\in \{1, \ldots, n\}$ and $\lambda = (\lambda_i)$.
\end{proof}

\section{Estimates of the principal curvatures}

Throughout this section we will assume that the hypersurfaces remain strictly $h$-convex as long as the flow exists. We will justify this assumption in the next section.

We will see  that we can bound $F$ uniformly from above, if we have an upper bound on $\chi$. Hence our goal is to estimate $\chi$ from above for some small but fixed interval $[0, \eps]$, only depending on bounded quantities.

Firstly, we note the following:
\begin{Lemma}
\label{LowBoundGraph}
Let $t_0 \in [0, T^*)$ be fixed and let $M_{t_0}$ be a graph over the geodesic sphere with center equal to the center of the inball of $M_{t_0}$, $M_{t_0} = $ graph $u_{|\bbs^n}$. Choose $\beta >0 $ such that $e^{\beta} \leq \inf_{M_{t_0}} \cosh (au)$. 

Let $t_1 := \min\{t_0 + \frac{\beta}{2a^2c_0}, T^*\}$. Then for $t\in [t_0, t_1)$ the graph representation is still valid for $M_t$ and we have the estimate
\begin{equation}
\label{LowBoundGraphEq}
	u \geq \frac{\beta}{2a}.
\end{equation}
Furthermore we also get an upper estimate for $\chi$:
\begin{equation}
\label{BoundChiEq}
 \sup_{t\in [t_0, t_1)}\sup_{x\in M_t}\chi(x) \leq \sup_{t\in [t_0, t_1)}\sup_{x\in M_t}\frac{1}{\sinh(au)(x)} \leq \frac{1}{\sinh\left(\frac{\beta}{2}\right)}.
\end{equation}
\end{Lemma}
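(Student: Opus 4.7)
The plan is to control the minimum of $u$ via the parabolic maximum principle applied to $\phi := \cosh(au)$, and to deduce graph persistence together with the $\chi$-bound from this estimate. Short-time smoothness of the flow combined with the strict h-convexity of $M_{t_0}$ (which places $p_{t_0}$ in the interior of the enclosed body) furnishes a maximal time $T_{*} \in (t_0, T^{*}]$ on which $M_s$ remains a graph over $S(p_{t_0})$. I will show that $T_{*} \geq t_1$ and that $u \geq \beta/(2a)$ on $[t_0, t_1)$; the $\chi$-estimate then follows directly.

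On $[t_0, T_{*})$, using $\phi_i = a\sinh(au)\, u_i$ and $\phi_{;ij} = a\sinh(au)\, u_{;ij} + a^2\cosh(au)\, u_iu_j$ together with the evolution equation \eqref{EvGraph} and the identity $\bar{H}/n = a\coth(au)$, a direct substitution produces an exact cancellation of the $F^{ij}u_iu_j$ contributions and yields
\begin{equation*}
L\phi \;=\; a\sinh(au)(\alpha F^{ij}g_{ij} + f)\,v^{-1} \;-\; a^2 \cosh(au)\,F^{ij}g_{ij}.
\end{equation*}
At a spatial minimum of $\phi$, $u_i = 0$, hence $v = 1$, and the positive semidefiniteness of the Hessian gives $F^{ij}\phi_{;ij} \geq 0$. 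Since $\alpha F^{ij}g_{ij} + f \geq 0$ and $F^{ij}g_{ij} \leq c_0$ by Corollary \ref{UniformlyParabolic} (absorbing the factor $n$ into $c_0$), Hamilton's form of the maximum principle provides the differential inequality
\begin{equation*}
\tfrac{d}{dt}\phi_{\min}(t) \;\geq\; L\phi\big|_{\min} \;\geq\; -a^2 c_0\, \phi_{\min}(t)
\end{equation*}
for the upper right-Dini derivative. Gr\"onwall and $\phi_{\min}(t_0) \geq e^{\beta}$ then give $\phi_{\min}(t) \geq e^{\beta - a^2 c_0 (t-t_0)} \geq e^{\beta/2}$ on $[t_0, t_1) \cap [t_0, T_{*})$; inverting $\cosh$ via $\operatorname{arccosh}(e^{\beta/2}) \geq \log e^{\beta/2} = \beta/2$ yields \eqref{LowBoundGraphEq}.

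To close, the estimate $u \geq \beta/(2a) > 0$ keeps $p_{t_0}$ at uniform positive distance from $M_t$, while Lemma \ref{PresPinch} preserves strict h-convexity and hence strict convexity of $M_t$; since convex bodies are star-shaped with respect to every interior point, a standard continuity argument rules out breakdown of the graph representation before $t_1$, giving $T_{*} \geq t_1$. The second inequality in \eqref{BoundChiEq} is then immediate from $\sinh(au) \geq \sinh(\beta/2)$, and the first follows from the pointwise comparison $\chi \leq 1/\sinh(au)$ valid in the present geometric setting (alternatively, apply the maximum principle to the evolution equation for $\chi$ listed earlier: at a spatial maximum of $\chi$, the Weingarten term $-F^{ij}h^k_ih_{kj}\chi$ dominates the remaining contributions under h-convexity). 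The principal technical hurdle is verifying the algebraic cancellation producing the clean form of $L\phi$; the remainder of the argument is routine maximum-principle bookkeeping.
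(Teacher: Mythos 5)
Your control of $\inf \cosh(au)$ matches the paper's argument in substance: the paper defines $\varphi := e^{a^2 c_0 (t-t_0)}\cosh(au)$ and applies the maximum principle directly, whereas you work with $\phi = \cosh(au)$ and invoke Gr\"onwall at the end; these are the same estimate packaged differently, and your computation of $L\phi$, the sign analysis at a spatial minimum, and the step $\operatorname{arccosh}(e^{\beta/2})\geq \beta/2$ are all correct. The graph-persistence reasoning via strict convexity and $u>0$ is also fine.

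However, your justification of the first inequality in \eqref{BoundChiEq} is wrong. You assert a pointwise comparison $\chi \leq 1/\sinh(au)$, but since $\chi = v/\sinh(au)$ with $v = \sqrt{1+|Du|^2}\geq 1$, the pointwise inequality runs the \emph{other} way: $\chi \geq 1/\sinh(au)$. The first inequality in \eqref{BoundChiEq} is a statement about suprema, not a pointwise bound, and it requires a genuine argument: at a space-time point $x_0$ where $\chi$ attains its supremum over $M_t$, $t\in[t_0,t_1-\delta]$, one has $\chi_i = 0$, which the paper shows is equivalent to $h^k_i u_k v^2 = 0$; strict convexity (positive definiteness of $(h^i_j)$) then forces $Du=0$, hence $v=1$ and $\chi = 1/\sinh(au)$ \emph{at that point only}. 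Your parenthetical alternative --- invoking the evolution equation of $\chi$ and claiming that the Weingarten term dominates --- is neither worked out nor needed; the vanishing of the spatial gradient at the supremum is the whole content of this step, with no parabolic structure required. You should replace the false pointwise comparison with this gradient-vanishing argument.
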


\begin{proof}
Define $\varphi:= e^{a^2c_0 (t-t_0)}\cosh(au)$. Let $0<T<T^*$. Let $x_0 = x_0(t_0)$, with $0<t_0\leq T$, be a point in $M_{t_0}$ such that  
\begin{equation}
	\sup_{M_0} \varphi < \sup\left\{\sup_{M_t} \varphi: 0<t\leq T\right\} = \varphi(x_0).
\end{equation}
In view of the maximum principle we obtain from \eqref{EvGraph}:
\begin{equation}
0 \geq \varphi^{-1}(\dot\varphi - F^{ij}\varphi_{;ij}) > a^2c_0 - F^{ij}g_{ij} a^2 \geq 0.
\end{equation}
Hence 
\begin{equation}
	\inf_{M_t} \cosh(au) \geq e^{-a^2c_0(t-t_0)}\inf_{M_0} \cosh(au),
\end{equation} 
which implies
\begin{equation}
	u\geq a^{-1}\left(\beta - a^2c_0 (t-t_0)\right).
\end{equation}
This proves the first part of the claims.

Let $\delta > 0$ be small. Let $t\in [t_0, t_1-\delta]$ and $x_0 \in M_t$ be given such that $\chi_{|[t_0, t_1-\delta]}$ assumes its supremum at $x_0$. Then we have $\chi_i = 0$ for $i\in \{1, \ldots n\}$, which is tantamount to
\begin{equation}
	0 = \frac{v_i}{\sinh(au)} - v \frac{\bar{H}}{n\sinh(au)} u_i = -h^k_i u_k v^2.
\end{equation}
Since $(h^i_j)$ is positive definite, this implies $Du = 0$. Hence $v=1$ and the proof of the Lemma follows by taking the limit $\delta\rightarrow 0$.
\end{proof}

Next, we want to establish uniform bounds on the outer radius and the inradius of $M_t$ for $t\in [0, T^*)$. Herefore we need some results on $h$-convex domains in $\bbh^{n+1}_{\frac{1}{a}}$.

\begin{Theorem}
\label{VolumeThm}
Let $\Omega$ be a $h$-convex domain in $\bbh^{n+1}_{\frac{1}{a}}$ and denote the center of an inball by $p$ and its radius by $\rho$. Furthermore let $\tau := \tanh(a\frac{\rho}{2})$.
	We have the inequality
	\begin{equation}
	\label{PinchingRadius}
		\textnormal{maxd}(p, \partial \Omega) - \rho \leq a\log\frac{(1+\sqrt{\tau})^2}{1+\tau} < a\log 2. 
	\end{equation}
	Therefore there exists a constant $c=c(a)>0$ such that
	\begin{equation}
	\label{PinchingRadius2}
		R \leq c(\rho + \rho^{\frac12}),
	\end{equation}
	where $R$ denotes the outer radius of $\Omega$.
\end{Theorem}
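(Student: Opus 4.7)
The plan is to combine the supporting horosphere at the farthest boundary point with a carefully chosen contact point of the inball, and to extract the inequality from a Busemann-function computation in hyperbolic space.

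First I would let $q^*\in\partial\Omega$ realize $R:=\maxd(p,\partial\Omega)$; the geodesic from $p$ to $q^*$ meets $\partial\Omega$ transversally along the outward normal. By $h$-convexity there is a supporting horosphere $H_{q^*}$ with $\Omega\subset B_{q^*}$ whose center at infinity $\xi^*$ lies on the prolongation of this geodesic beyond $p$. Next I would exploit the maximality of the inscribed ball $B_\rho(p)$: if every contact point $q\in\partial B_\rho(p)\cap\partial\Omega$ had outward unit normal $n_q$ with $\langle n_q,v\rangle<0$ for $v$ the unit tangent at $p$ pointing toward $q^*$, then translating the center of the inball by $\epsilon v$ for small $\epsilon>0$ would strictly increase the distance from the new center to every contact point---hence, by continuity, to all of $\partial\Omega$---contradicting that $\rho$ is the inradius. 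This yields a contact point $q_*\in\partial B_\rho(p)\cap\partial\Omega$ such that the angle $\alpha$ at $p$ between the geodesics $pq_*$ and $pq^*$ satisfies $\alpha\leq\pi/2$.

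At this $q_*$ the supporting horosphere $H_{q_*}$ is internally tangent to $\partial B_\rho(p)$, so its center $\xi_*$ at infinity lies on the prolongation of the geodesic from $q_*$ through $p$. Working in the Poincar\'e ball model of $\bbh^{n+1}_{\frac{1}{a}}$ centered at $p$, a direct computation (using $\cosh^2 x+\sinh^2 x=\cosh(2x)$ and $2\sinh x\cosh x=\sinh(2x)$) gives for the Busemann function $b_{\xi_*}$ normalized to vanish at $p$ the identities
\[
b_{\xi_*}(q_*)=-\rho,\qquad b_{\xi_*}(q^*)=-\frac{1}{a}\log\bigl(\cosh(aR)+\sinh(aR)\cos\alpha\bigr).
\]
The inclusion $q^*\in\Omega\subset B_{q_*}=\{b_{\xi_*}\geq -\rho\}$ then forces
\[
\cosh(aR)+\sinh(aR)\cos\alpha\leq e^{a\rho},
\]
and since $\cos\alpha\geq 0$ one obtains $\cosh(aR)\leq e^{a\rho}$, which upon solving yields $R-\rho\leq \frac{1}{a}\log(1+\sqrt{1-e^{-2a\rho}})$. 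A short algebraic manipulation using $\tau=\tanh(a\rho/2)=(e^{a\rho}-1)/(e^{a\rho}+1)$ identifies this expression with $\frac{1}{a}\log\frac{(1+\sqrt\tau)^2}{1+\tau}$, giving \eqref{PinchingRadius}; the strict inequality $\log\frac{(1+\sqrt\tau)^2}{1+\tau}<\log 2$ follows from $\sqrt{1-e^{-2a\rho}}<1$.

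For \eqref{PinchingRadius2}, I would combine the previous bound with $\log(1+x)\leq x$ and $\tanh(a\rho/2)\leq a\rho/2$ to deduce $R-\rho\leq \sqrt{2\rho/a}$, hence $R\leq c(a)(\rho+\sqrt\rho)$ with $c(a):=\max(1,\sqrt{2/a})$. The most delicate step is the construction of the contact point $q_*$ with $\alpha\leq\pi/2$: here the inball maximality must be combined with the smoothness of $\partial\Omega$ via a first-variation argument, but once this contact point is secured the remainder is a concrete computation in the ball model.
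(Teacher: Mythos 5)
Your proof is correct, and it takes a genuinely different route from the paper. The paper simply cites \cite[Theorem 3.1]{BorMiqCompHad} for \eqref{PinchingRadius} and then derives \eqref{PinchingRadius2} by a two-case estimate on $(1+2\sqrt\tau)^a$ (distinguishing $a\rho/2\geq 1/4$ from $a\rho/2<1/4$ and invoking the Bernoulli inequality in the second case). You instead give a self-contained proof of \eqref{PinchingRadius}: the contact-set argument that produces $q_*\in\partial B_\rho(p)\cap\partial\Omega$ with $\angle_p(q_*,q^*)\leq \pi/2$ (the contact set cannot lie in an open hemisphere of $\partial B_\rho(p)$, else the inball could be enlarged), followed by the supporting-horoball inclusion $q^*\in\Omega\subset B_{q_*}$ and the Busemann-function formula to obtain $\cosh(aR)\leq e^{a\rho}$. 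Your Busemann computation checks out (the degenerate cases $\alpha=0,\pi$ give $\mp R$ as they should), and the algebra $1+\sqrt{1-e^{-2a\rho}}=\frac{(1+\sqrt\tau)^2}{1+\tau}$ is correct. Your passage to \eqref{PinchingRadius2} via $\log(1+x)\leq x$ and $\tanh x\leq x$ is also shorter and cleaner than the paper's case split. What you lose by not citing is only brevity; what you gain is a transparent, self-contained argument. One remark worth flagging: your computation gives $R-\rho\leq\frac{1}{a}\log\frac{(1+\sqrt\tau)^2}{1+\tau}$, whereas the theorem as printed has the prefactor $a$ instead of $\frac{1}{a}$; the $\frac{1}{a}$ is the dimensionally correct form obtained by rescaling the Borisenko--Miquel inequality from curvature $-1$ to curvature $-a^2$, so the paper's statement contains a typo that your derivation silently corrects. (Also, a minor slip in notation: the vector paired with $v$ in the first-variation argument should be the unit tangent at $p$ pointing toward $q$, not the outward normal $n_q$ at $q$, since the pairing must live in $T_pM$; the geometric content is unaffected.)
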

\begin{proof}
The proof of \eqref{PinchingRadius} can be found in \cite[Theorem 3.1]{BorMiqCompHad}. To prove \eqref{PinchingRadius2} we note $R\leq \maxd(p, \partial\Omega)$ and obtain from inequality \eqref{PinchingRadius}
\begin{equation}
	e^R \leq e^\rho \cdot \left( 1+ 2\sqrt \tau\right)^a.
\end{equation}
If $\frac{a\rho}{2} \geq \frac14$, then $(1+ 2\sqrt \tau)^a \leq 3^a \leq (6 a\rho)^a \leq e^{c\rho}$ with some constant $c=c(a)$. This implies $R\leq cÊ\rho$.
 
On the other hand, if $\frac{a\rho}{2} < \frac14$, then by using $e^x-e^{-x} = e^{-x} (e^{2x} -1) \leq \frac{2x}{1-2x} \leq 4x$ for $x<\frac14$, we obtain from the Bernoulli inequality
\begin{equation}
	(1+ 2\sqrt{\tau})^a \leq (1 + 4\sqrt{a\rho})^aÊ\leq e^{4a\sqrt{a\rho}}. 
\end{equation}
This implies \eqref{PinchingRadius2} with $c= 1+4a^{\frac32}$.
\end{proof}

We will also need the following monotonicity of mixed volumes:
\begin{Lemma}
\label{MixedVolumeMonotonicity}
Let $A\subset B\subset \bbh^{n+1}_{\frac1a}$ be convex domains with smooth boundary. Then there holds for all $k\in \{1, \ldots, n+1\}$
\begin{equation}
	V_k(A) \leq V_k(B).
\end{equation}
\end{Lemma}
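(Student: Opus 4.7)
The plan is to connect $A$ and $B$ by a smooth one-parameter family of smooth convex domains $\{\Omega_s\}_{s\in[0,1]}$, nested so that $\Omega_s\subseteq\Omega_t$ for $s\le t$, with $\Omega_0=A$ and $\Omega_1=B$, and to show that $V_k(\Omega_s)$ is non-decreasing in $s$. Along such a family the boundary $\partial\Omega_s$ moves by $\dot x=\psi_s\nu$ with $\psi_s\ge 0$ (outward motion). Repeating the computation in the proof of Lemma~\ref{PresVol} verbatim, but with the speed $f-F$ replaced by a generic non-negative $\psi_s$, yields, in the paper's indexing,
\begin{equation*}
\binom{n}{k}\frac{d}{ds}\int_{\partial\Omega_s}H_{k-1}\,\mathrm{d}\mu_s=\int_{\partial\Omega_s}\psi_s\bigl(kH_k+a^2(n-k+2)H_{k-2}\bigr)\,\mathrm{d}\mu_s\ge 0
\end{equation*}
for $2\le k\le n$, because every elementary symmetric function $H_j$ is non-negative on a convex hypersurface. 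The surface-area case reduces to the standard first variation $\frac{d}{ds}|\partial\Omega_s|=\int\psi_sH\,\mathrm{d}\mu_s\ge 0$, and $V_{n+1}(A)\le V_{n+1}(B)$ is immediate from $A\subseteq B$. Integrating in $s$ from $0$ to $1$ then delivers $V_k(A)\le V_k(B)$ for every $k\in\{1,\ldots,n+1\}$.

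To build the family I would fix $p_0\in\operatorname{int}(A)\subseteq\operatorname{int}(B)$ and, using Hadamard's theorem, represent $\partial A$ and $\partial B$ as radial graphs $r_A\le r_B$ over $\bbs^n$ in geodesic polar coordinates around $p_0$. The natural interpolation $r_s:=(1-s)r_A+sr_B$ produces a smooth family of radial graphs with non-negative radial, hence non-negative normal, velocity, which is exactly what the calculation above requires.

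The main obstacle is to guarantee that the intermediate graphs $\partial\Omega_s$ are still convex, so that the non-negativity of the $H_j$ applies. I would resolve this by a two-step approximation. First, when $A$ and $B$ are strictly convex and $\|r_B-r_A\|_{C^2}$ is sufficiently small, the principal curvatures of $\partial\Omega_s$ depend continuously on $s$ and remain positive throughout the interpolation; the general strictly convex case is reduced to this one by inserting finitely many smooth strictly convex intermediates $A=\Omega^{(0)}\subset\Omega^{(1)}\subset\cdots\subset\Omega^{(N)}=B$ with consecutive pairs $C^2$-close, and chaining the monotonicity step by step. Second, to pass from merely smooth convex $A,B$ to strictly convex ones, I would approximate $A$ from outside and $B$ from inside by smooth strictly convex domains (for instance by a small normal perturbation after mollification) and take the limit, using the continuity of $V_k$ in the $C^2$-topology on the space of convex hypersurfaces.
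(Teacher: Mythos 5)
Your first-variation computation is correct: running the proof of Lemma~\ref{PresVol} with an arbitrary non-negative outward speed $\psi_s$ in place of $f-F$ gives exactly the identity you state (the divergence-freeness of $(H_{k-1})^{ij}$ from Lemma~\ref{divFree} kills the $\psi_{;ij}$ term, leaving only the algebraic terms), and the sign follows because every $H_j$ is non-negative on a convex hypersurface; the cases $V_n$ (surface area) and $V_{n+1}$ are also handled properly. However, the paper does not argue this way at all: it disposes of $k=n+1$ directly from the definition and, for $1\le k\le n$, cites an integral-geometric result (Corollary~3.4.7 of Solanes) that proves monotonicity of the hyperbolic intrinsic volumes by kinematic-formula/Cauchy--Crofton methods, so the issue of constructing a nested homotopy never arises.

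Your approach has a genuine gap precisely at that construction. The reduction to $C^2$-close endpoints is fine, but to chain it you need finitely many smooth, strictly convex, nested intermediates $A=\Omega^{(0)}\subset\cdots\subset\Omega^{(N)}=B$ with consecutive pairs $C^2$-close, and you give no construction of these. That existence claim is itself a $C^2$-path-connectedness statement for the nested strictly convex domains between $A$ and $B$, which is of the same nature as the problem you are trying to solve: radial interpolation of radial graph functions does not preserve convexity in general, and there is no Minkowski sum in $\bbh^{n+1}_{\frac1a}$ to manufacture such a chain as one would in $\bbr^{n+1}$. As written, the reduction is circular --- the step you iterate presupposes the milestones you need to produce. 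To close the gap you would have to supply an explicit monotone homotopy through strictly convex domains (e.g.\ via a horospherical support-function parametrization of $h$-convex bodies, under which convex combination does preserve $h$-convexity, or via a truncated equidistant flow from $\partial A$ inside $B$, together with a smoothing step), which is genuinely nontrivial. The paper's citation of Solanes sidesteps this entirely, so while your variational route is conceptually attractive and, if completed, would give a self-contained alternative proof consistent with the paper's own evolution-equation toolkit, in its present form it is incomplete.
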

\begin{proof}
	If $k=n+1$, the inequality follows directly from the definition of $V_{n+1}$. Hence let $1\leq k\leq n$. Then the inequality follows from \cite[Corollary 3.4.7]{Solanes}. 
\end{proof}

As a consequence of Theorem \ref{VolumeThm} and the monotonicity of mixed volumes we obtain:
\begin{Korollar}
\label{BoundRho}
Denote by $\rho_t$ the inradius and by $R_t$ the outer radius of the hypersurfaces $M_t$. Then there exists a constant $0< c_1 = c_1 (a, V_{n+1-k}(M_0))$ such that for $t\in [0, T^*)$ we have
\begin{equation}
\label{BoundsRadii}
c_1^{-1} \leq \rho_t \leq R_t \leq c_1.
\end{equation}
This also implies for $t\in [0, T^*)$ and $p,q \in \Omega_t$ 
\begin{equation}
	\dist(p,q) \leq 2 c_1.
\end{equation}
\end{Korollar}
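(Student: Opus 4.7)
The plan is to combine the preservation of the mixed volume $V_{n+1-k}(M_t) = V_{n+1-k}(M_0)$ from Lemma \ref{PresVol} with the sandwich of $\Omega_t$ between two concentric geodesic balls whose radii are comparable thanks to Theorem \ref{VolumeThm}, and then apply the monotonicity of mixed volumes from Lemma \ref{MixedVolumeMonotonicity}.

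More precisely, let $p_t$ denote the center of an inball of $M_t$, so that the geodesic ball $B_{\rho_t}(p_t)$ is contained in $\Omega_t$. Since $\Omega_t$ is h-convex and in particular geodesically convex, every point of $\Omega_t$ has distance at most $\maxd(p_t, \partial \Omega_t)$ to $p_t$, and hence $\Omega_t$ is contained in the ball $B_{R_t'}(p_t)$ with $R_t' := \maxd(p_t, \partial \Omega_t)$. Applying Lemma \ref{MixedVolumeMonotonicity} twice gives
\begin{equation}
V_{n+1-k}(B_{\rho_t}(p_t)) \,\leq\, V_{n+1-k}(M_0) \,\leq\, V_{n+1-k}(B_{R_t'}(p_t)).
\end{equation}

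The key observation now is that for a geodesic ball $B_r$ in $\bbh^{n+1}_{1/a}$, the quantity $V_{n+1-k}(B_r)$ is an explicit, strictly increasing, continuous function of $r$ which vanishes at $r = 0$ and blows up as $r \to \infty$ (this is clear from the formula for $V_{n+1}$ and, for $k \geq 1$, because the hypersurface of a sphere has $H_{k-1}$ and area increasing in $r$). Consequently the left inequality forces $\rho_t \leq C$ and the right inequality forces $R_t' \geq c > 0$, with constants depending only on $a$ and $V_{n+1-k}(M_0)$. Plugging the resulting upper bound on $\rho_t$ into Theorem \ref{VolumeThm}, namely $R_t' \leq \rho_t + a \log 2$ (or equivalently the bound $R_t \leq c(\rho_t + \sqrt{\rho_t})$), yields an upper bound on $R_t'$ and hence on $R_t$; plugging the lower bound on $R_t'$ back into the same inequality then forces $\rho_t$ to be bounded from below. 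This gives the chain $c_1^{-1} \leq \rho_t \leq R_t \leq c_1$.

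The diameter estimate follows immediately: since $\Omega_t \subset B_{R_t'}(p_t)$, any two points $p, q \in \Omega_t$ satisfy
\begin{equation}
\dist(p, q) \leq \dist(p, p_t) + \dist(p_t, q) \leq 2 R_t' \leq 2 c_1
\end{equation}
after possibly enlarging $c_1$.

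I do not expect any real obstacle here; the only mild care needed is in verifying the monotonicity of $r \mapsto V_{n+1-k}(B_r)$ in hyperbolic space (for which one can either use the explicit integral formula for $V_{n+1}$ and the fact that on a sphere $H_{k-1}$ is a positive constant depending monotonically on $r$, together with the area formula) and in making sure the h-convexity of $\Omega_t$ is indeed preserved so that Theorem \ref{VolumeThm} applies at every time $t \in [0, T^*)$; both are already in hand from the standing assumption at the beginning of the section and from Lemma \ref{PresPinch}.
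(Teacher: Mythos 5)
Your proof follows the same overall strategy as the paper's: preservation of $V_{n+1-k}$, monotonicity of mixed volumes for nested convex bodies, and the radius pinching of Theorem \ref{VolumeThm}. The idea is correct, but the final step as written does not quite close. You derive $R_t' := \maxd(p_t, \partial\Omega_t) \geq c > 0$ and then ``plug this into the same inequality.'' If that inequality is the additive bound $R_t' \leq \rho_t + a\log 2$, you only get $\rho_t \geq c - a\log 2$, which is vacuous whenever $c \leq a\log 2$, i.e.\ whenever the preserved mixed volume is small; the right-hand side of \eqref{PinchingRadius} must instead be used in its full strength, since it tends to $0$ as $\rho_t \to 0$. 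If the inequality is instead $R_t \leq c(\rho_t + \rho_t^{1/2})$ from \eqref{PinchingRadius2}, note that it bounds $R_t$, not $R_t'$, and since $R_t \leq R_t'$, your lower bound on $R_t'$ does not transfer to $R_t$. There are two quick repairs. One is to observe that the proof of \eqref{PinchingRadius2} actually establishes $\maxd(p_t, \partial\Omega_t) \leq c(\rho_t + \rho_t^{1/2})$, because it first bounds $e^{\maxd(p_t,\partial\Omega_t)}$ and only afterwards uses $R \leq \maxd$; this combined with $R_t' \geq r$ (where $V_{n+1-k}(B_r) = V_{n+1-k}(M_0)$) and $\rho_t \leq 1$ gives $\rho_t \geq (r/(2c))^2$. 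The other, which is what the paper does, is to compare $\Omega_t$ with a ball of radius $R_t$ directly (by definition of outer radius) and obtain $r \leq R_t$, then apply \eqref{PinchingRadius2} as stated. With either fix, the rest of your argument, including the diameter estimate, goes through.
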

\begin{proof}
This follows from Theorem \ref{VolumeThm} by noting that for the flow \eqref{floweq} with $f=f_k$, $k\in \{0, \ldots, n\}$, the mixed volume $V_{n+1-k}$ is preserved. We argue as in \cite[Corollary 3.6]{McCoyMixedAreaGen} and show only the lower bound in \eqref{BoundsRadii}, the upper bound follows analogously. 

Let $r=r(V_{n+1-k}(M_0))$ be such that \mbox{$V_{n+1-k}(B_r) = V_{n+1-k}(M_0)$}, where $B_r$ denotes a geodesic ball of radius $r$. Due to Lemma \ref{MixedVolumeMonotonicity} we obtain $r\leq R_t$ for all $t\in [0, T^*)$. Assume $\rho_t\leq 1$, for otherwise the lower estimate on $\rho_t$ is trivial. Inequality \eqref{PinchingRadius2} implies
\begin{equation}
	\rho_t \geq \left(\frac{R_t}{2c}\right)^2 \geq \left(\frac{r}{2c}\right)^2 =: c_1^{-1}. 
\end{equation}
\end{proof}

Now we have everything we need to get a uniform bound for the curvature function. We argue similarly as in \cite[Section 7]{CabMiqHyp}.
\begin{Theorem}
There exists $c_2 = c_2(n, a^2, M_0)>0$ such that
\begin{equation}
\label{BoundF}
	\sup_{t\in [0, T^*)}\sup_{x\in M_t} F(x) \leq c_2.
\end{equation}
\end{Theorem}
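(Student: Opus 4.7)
The plan is to exploit the uniform inradius bound of Corollary~\ref{BoundRho} to produce, at every $t_0\in[0,T^*)$, a time interval $[t_0,t_0+\eps]$ of \emph{fixed} positive length on which Lemma~\ref{LowBoundGraph} yields a \emph{universal} upper bound on $\chi$, and then to run a parabolic maximum principle on a suitable auxiliary function built from $F$ and $\chi$ to transfer this control into a bound on $F$.

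Fix $t_0\in[0,T^*)$. By Corollary~\ref{BoundRho} the inradius satisfies $\rho_{t_0}\ge c_1^{-1}$. Centering geodesic polar coordinates at an inball center $p_{t_0}$ of $M_{t_0}$, the graph function $u$ of $M_{t_0}$ satisfies $u\ge c_1^{-1}$, whence $\inf_{M_{t_0}}\cosh(au)\ge\cosh(ac_1^{-1})=:e^\beta$ with $\beta$ depending only on $a$ and $V_{n+1-k}(M_0)$. Lemma~\ref{LowBoundGraph} then yields a fixed length $\eps:=\beta/(2a^2c_0)>0$ (independent of $t_0$) such that on $[t_0,\min(t_0+\eps,T^*))$ the graph representation persists and
\begin{equation*}
\chi\le\frac{1}{\sinh(\beta/2)}=:C_\chi.
\end{equation*}

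Consider $w:=F\chi$ on $[t_0,t_0+\eps]$ and compute $L(F\chi)=\chi\,LF+F\,L\chi-2F^{ij}F_{;i}\chi_{;j}$, substituting \eqref{EvF} and the evolution equation for $\chi$ from the previous section. The $\pm\chi F\cdot F^{ij}h^k_ih_{kj}$ contributions cancel between $\chi LF$ and $FL\chi$. At a spacetime maximum of $w$ the first-order condition $w_{;i}=0$ forces $F_{;i}=-(F/\chi)\chi_{;i}$, which cancels the mixed gradient term $-2F^{ij}F_{;i}\chi_{;j}$ against $-2F\chi^{-1}F^{ij}\chi_{;i}\chi_{;j}$. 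Exploiting h-convexity (so that $F^{ij}h^k_ih_{kj}-a^2F^{ij}g_{ij}\ge 0$), the pinching of Lemma~\ref{PresPinch} (which controls the largest principal curvature in terms of $F$) and the uniform ellipticity of Corollary~\ref{UniformlyParabolic}, the surviving expression is non-positive whenever $F$ is sufficiently large, so the maximum principle yields $w\le C'$ on $[t_0,t_0+\eps]$ with $C'=C'(n,a,M_0)$. Combined with $\chi\ge 1/\sinh(ac_1)$ coming from $R_t\le c_1$ (Corollary~\ref{BoundRho}), this gives $F\le C'\sinh(ac_1)=:c_2$ on $[t_0,t_0+\eps]$; since $t_0$ is arbitrary and $c_2,\eps$ are independent of $t_0$, \eqref{BoundF} follows.

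The main technical obstacle is the maximum principle step: the positive quadratic-in-$F$ term $2F^2\bar Hn^{-1}v\chi$ arising from $FL\chi$ must be dominated by the remaining negative contributions. This requires careful use of h-convexity together with the pinching \eqref{PresPinchEq} to bound the largest principal curvature in terms of $F$, and, if necessary, the test function $w$ must be augmented (for instance by adding a suitable multiple of $-u$ or $\chi^2$) to absorb the bad term. Once this algebraic cancellation is in place, the rest of the argument is routine.
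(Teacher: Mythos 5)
Your overall strategy matches the paper's: use Corollary~\ref{BoundRho} and Lemma~\ref{LowBoundGraph} to get a fixed-length interval $[t_0,t_1]$ on which the graph representation persists and $\chi$ is uniformly controlled, and then run a maximum principle on an auxiliary function combining $F$ and $\chi$. However, the specific test function $w=F\chi$ does \emph{not} work, and the obstacle you flag at the end is genuine, not just a ``routine'' algebraic cancellation to be checked.

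There are two distinct problems with $w=F\chi$. First, the cubic-in-$F$ contributions cancel \emph{exactly}: $\chi LF$ contributes $+\chi F F^{ij}h^k_ih_{kj}$ and $F L\chi$ contributes $-F F^{ij}h^k_ih_{kj}\chi$, leaving nothing of cubic order. What survives on the negative side is only $-f\,\chi\, F^{ij}h^k_ih_{kj}$, which is quadratic in $F$ (via $F^{ij}h^k_ih_{kj}\ge \eps F^2$) with a coefficient $f$ that is a priori bounded below only by $a-\alpha$ (it is a global average of $F$, which need not be large where $F$ attains its spatial maximum). Second, the positive quadratic term is $2F^2\,\bar Hn^{-1}v\,\chi$, and the gradient factor $v=\sqrt{1+|Du|^2}$ is \emph{not} a priori bounded at this stage of the argument; nowhere have you (or the paper) established a uniform gradient estimate for the graph. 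So the comparison $2\bar Hn^{-1}v\le \eps f$ is not available, and the maximum principle does not close.

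The paper resolves both issues simultaneously with the specific choice $\eta:=(\chi^{-1}-\gamma)^{-1}$, $\gamma:=\tfrac12\sinh(\beta/2)$, and the test function $F\eta$. The inversion $\chi\mapsto\chi^{-1}$ is what kills the $v$ dependence: since $\chi^{-1}=\sinh(au)/v$, the combination $v\chi^{-1}=\sinh(au)$ appears in the reaction term and replaces $\bar Hn^{-1}v\chi$ by $a\cosh(au)\eta^2$, which is bounded by $a\cosh(2ac_1)\eta^2$. The shift by $-\gamma$ is what produces the surviving negative cubic term: because $\eta^2\chi^{-1}=\eta+\gamma\eta^2$, the coefficient of $F^{ij}h^k_ih_{kj}$ in $FL\eta$ is $-(\eta+\gamma\eta^2)F$ rather than $-\eta F$, and after cancellation with $\eta LF$ the piece $-\gamma F F^{ij}h^k_ih_{kj}\eta^2\le -\eps\gamma F^3\eta^2$ remains. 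This cubic negative term dominates the quadratic positive term $2F^2a\cosh(2ac_1)\eta^2$ once $F>3a\cosh(2ac_1)/(\eps\gamma)$, which yields the bound on $F\eta$ at the maximum and hence on $F$, since $\eta$ is bounded above and below on $[t_0,t_1]$ by \eqref{BoundChiEq} and \eqref{BoundEta}. In short: the augmentation you anticipate is needed, and it must be precisely the $\gamma$-shifted reciprocal of $\chi$, not a simple additive correction by $-u$ or $\chi^2$; without it, neither the $v$-dependence nor the missing cubic term is fixed.
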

\begin{proof}
	Let $t_0 \in [0, T^*)$, and let $M_{t_0}$ be represented as a graph in geodesic polar coordinates centered at the center of an inball of $M_{t_0}$, $M_{t_0} = $ graph $u_{|\bbs^{n}}$. Corollary \ref{BoundRho} implies $c_1 \geq u(t_0) \geq c_1^{-1}$. Hence from Lemma \ref{LowBoundGraph} we confer that for small $\delta > 0$ we can choose $\beta := \log \cosh\tfrac{a}{c_1}$ such that the graph representation is valid for $t \in  [t_0, \underbrace{\min\{t_0 + \tfrac{\beta}{2a^2c_0}, T^*-\delta\}}_{=:t_1}]$ and 
	\begin{equation}
		\frac{\beta}{2a} \leq u(t) \leq 2c_1. 
	\end{equation}	
	
Let $\gamma:= \tfrac{1}{2}\sinh \tfrac{\beta}{2}$ and define $\eta := \frac{1}{\chi^{-1} - \gamma}$, then for $t \in [t_0, t_1]$ we obtain 
\begin{equation}
\label{BoundEta}
	\frac{1}{\sinh(2c_1a) - \gamma}\leq\eta(t) \leq \frac{1}{\gamma}
\end{equation}
 in view of \eqref{BoundChiEq}.
 
 We also obtain the following evolution equation for $F\eta$:
\begin{equation}
\begin{split}
\label{EvFEta}
L(F\eta) &= - F^{ij}h_i^kh_{kj} F\eta^2\gamma -f\eta F^{ij}h^k_ih_{kj} + a^2 \eta (f-F) F^{ij}g_{ij} \\
&+ 2F^{ij}(F\eta)_i \eta_j\eta^{-1} + (2F+\alpha F^{ij}g_{ij}-f) a \cosh(au) F \eta^2.
\end{split}
\end{equation}

Let $t\in [t_0, t_1]$ and $x_0 \in M_t$ be given such that
\begin{equation}
	\sup_{t\in [t_0,t_1]}\sup_{x\in M_t} (F\eta)(x) = (F\eta)(x_0).
\end{equation}
We introduce Riemannian normal coordinates at $x_0$ such that the principal curvatures are monotonically ordered, $\kappa_1 \leq \kappa_2\leq \ldots\leq \kappa_n$. Then we use the maximum principle and infer
\begin{equation}
\begin{split}
0 &\leq \frac{d}{dt} (F\eta) - F^{ij}(F\eta)_{;ij}  = - F^{ij}h_i^kh_{kj} F\eta^2\gamma -f\eta F^{ij}h^k_ih_{kj} \\
&+ a^2 \eta (f-F) F^{ij}g_{ij} + 2F^{ij}(F\eta)_i \frac{\eta_j}{\eta} + (2F+\alpha F^{kl}g_{kl}-f)a \cosh(au) F \eta^2.
\end{split}
\end{equation}
Now \eqref{PresPinchEq} yields
\begin{equation}
	F^{ij}h^k_ih_{kj} \geq F \kappa_1 \geq \eps F^2.
\end{equation}
We infer the inequality
\begin{equation}
0\leq -\eps \gamma F^3\eta^2 + (2F +\alpha F^{kl}g_{kl}) F \eta^2 a \cosh(2ac_1),
\end{equation}
which implies
\begin{equation}
	(F\eta)(x_0) \leq \max\left\{\frac{3a\cosh(2ac_1)}{\eps\gamma}, nc_0\alpha, \sup_{M_{t_0}} (F\eta)\right\}.
\end{equation}

The estimate \eqref{BoundF} then follows from \eqref{BoundEta}, taking the limit $\delta \rightarrow 0$ and the fact that $t_0$ can be chosen arbitrarily in $[0, T^*)$.
\end{proof}

The boundedness of $F$ implies the boundedness of $|A|^2$ due to the curvature pinching, as we will show in the following
\begin{Korollar}
	There exists $c_3 = c_3(n, c_2, \alpha) > 0$, such that 
	\begin{equation}
		|A| \leq c_3.
	\end{equation}
\end{Korollar}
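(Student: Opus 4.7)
The plan is to show that $|A|$ can be controlled by $H$ (since h-convexity gives $\kappa_i > a > 0$, so $|A|^2 = \sum \kappa_i^2 \leq (\sum \kappa_i)^2 = H^2$), and then to bound $H$ by combining the bound $F \leq c_2$ with the inequalities from Lemma 2.5 and the pinching from Lemma 4.3. Accordingly I would split into the two cases of Assumption 1.2.

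In the convex case, I would apply inequality \eqref{FHIneq} directly to $\tilde F$ on $\Gamma_+$. Since $\tilde F$ is convex, positively homogeneous of degree $1$ and normalized, it satisfies $\tilde F(\lambda) \geq \tfrac{1}{n} \sum_i \lambda_i$ for $\lambda \in \Gamma_+$. Applied with $\lambda_i = \kappa_i - \alpha$ this yields
\begin{equation*}
F(\kappa) = \tilde F(\kappa - \alpha e) \geq \tfrac{1}{n}(H - n\alpha),
\end{equation*}
so $H \leq nF + n\alpha \leq nc_2 + n\alpha$, and hence $|A| \leq H \leq nc_2 + n\alpha$. This already gives a bound depending only on $n$, $c_2$, $\alpha$.

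In the concave (and inverse-concave) case the analogous inequality goes the wrong way, so here the pinching estimate \eqref{PresPinchEq} is essential. First I would upper-bound $\kappa_1$ in terms of $F$: by monotonicity and homogeneity of $\tilde F$, together with $\tilde F(1,\ldots,1)=1$, one has $\tilde F(\lambda)\geq \tilde F(\lambda_{\min} e) = \lambda_{\min}$ for all $\lambda \in \Gamma_+$, so
\begin{equation*}
\kappa_1 - \alpha \leq \tilde F(\kappa - \alpha e) = F \leq c_2.
\end{equation*}
Then the pinching $\kappa_1 - a \geq \tilde\eps (H - na)$ from Lemma 4.3 rearranges to $H \leq na + \tilde\eps^{-1}(\kappa_1 - a) \leq na + \tilde\eps^{-1}(c_2 + \alpha - a)$, and again $|A| \leq H$ is bounded.

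There is really no main obstacle: both cases reduce to the same inequality $|A|\leq H$ together with one of the two standard tools (Lemma 2.5 in the convex case, the curvature pinching in the concave case). The only subtle point worth being careful about is that the constant in the concave case does in fact also depend on $a$ and on $\tilde\eps$ (which is inherited from $M_0$), so the dependencies of $c_3$ asserted in the statement should be read as including these quantities via the pinching constant coming from Lemma 4.3.
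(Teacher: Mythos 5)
Your proof is correct and follows essentially the same case split as the paper: in the convex case you use \eqref{FHIneq} to bound $H$ by $F$, and in the concave case you use the pinching \eqref{PresPinchEq} together with the fact that $\tilde F(\lambda)\ge\lambda_{\min}$ (the paper derives this via Euler's relation and \eqref{FGIneq}, you via monotonicity, which is equally valid). Your remark that the constant in the concave case also depends on $a$ and $\tilde\eps$ is an accurate observation about the stated dependencies, though these are determined by $M_0$ and so implicitly covered.
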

\begin{proof}
	We distinguish two cases:
	
	Firstly, assume $F$ is a convex curvature function. Using \eqref{FHIneq} and the convexity of the hypersurfaces we obtain
	\begin{equation}
		F = \tilde{F} \circ \eta \geq \frac{1}{n} H\circ \eta = \frac1n H - \alpha \geq \frac1n \kappa_n - \alpha.
	\end{equation}
	The boundedness of $|A|$ now follows from the boundedness of $F$.
	
	Now assume $F$ is a concave curvature function. Then we infer from the curvature pinching \eqref{PresPinchEq} and from \eqref{FGIneq}
	\begin{equation}
		\kappa_n - a \leq \frac{1}{\tilde{\eps}} (\kappa_1 - a) \leq \frac{1}{\tilde{\eps}}(F - (a-\alpha)).
	\end{equation}
	Again the boundedness of $|A|$ follows from the boundedness of $F$.
\end{proof}

\section{Long time existence of the flow}

It remains to show that we have a uniform lower bound on the curvature function to infer the long time existence of the flow.
The following Lemma together with Lemma \ref{PresPinch} justifies the assumption of strict $h$-convexity during the flow. Note that for small times strict $h$-convexity holds due to the smoothness of the flow for small times and the strict $h$-convexity of the initial hypersurface.

Firstly, we want to cite a corollary of the parabolic Harnack inequality, which will allow us to estimate $F - (a-\alpha)$ uniformly from below.

\begin{Proposition}
\label{HarnackIneq}
For $(x_0, t_0) \in \bbr^n\times \bbr$ and $r\in \bbr_+$ we let $Q((x_0, t_0), r) := B_{r}(x_0) \times [t_0-r^2, t_0] \subset \bbr^n\times \bbr$ and $Q(r) := Q((0,0), r)$. Let $u \in C^{\infty}(Q(4 R))$ be a nonnegative solution of an equation of the form
\begin{equation}
Lu = - \dot u + a^{ij}u_{ij} + b^i u_i = f.
\end{equation}
Here $f=f(x, t, u(x, t))$ and we assume that there exists $\alpha \in \bbr_+$ so that $f$ satisfies the inequality $-\alpha u(x, t) \leq f(x, t, u(x, t)) \leq \alpha u(x, t)$ for all $(x, t)\in Q(4R)$. We assume the coefficients are measureable and bounded by a constant $c_0 \in \bbr_+$ and there exist $0<\lambda\leq \Lambda <\infty$ such that $\lambda (\delta^{ij}) \leq (a^{ij}) \leq \Lambda (\delta^{ij})$. 
Then there exists $c=c(n, \lambda, \Lambda, R, \Vert b\Vert_{L^{\infty}}, c_0, \alpha) > 0$ such that there holds
\begin{equation}
	\underset{Q\left((0, -4R^2), \frac R2\right)}{\sup} u \leq c\cdot \underset{Q(R)}{\inf} u.
\end{equation}
\end{Proposition}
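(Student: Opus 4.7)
The plan is to reduce the statement to the classical Krylov--Safonov parabolic Harnack inequality for nonnegative strong solutions of linear, uniformly parabolic equations in nondivergence form with bounded measurable coefficients.

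First, I would absorb the right-hand side $f$ into a zero-order coefficient. On the set $\{u>0\}$ define $c(x,t) := f(x,t,u(x,t))/u(x,t)$, and extend $c \equiv 0$ where $u$ vanishes. The hypothesis $-\alpha u \le f \le \alpha u$ gives $\|c\|_{L^{\infty}(Q(4R))} \le \alpha$, and $u$ satisfies the homogeneous linear equation
\begin{equation}
	-\dot u + a^{ij} u_{ij} + b^i u_i - c\, u = 0 \quad \text{in } Q(4R).
\end{equation}
This equation is uniformly parabolic with ellipticity constants $\lambda, \Lambda$, has drift $b^i$ bounded by $\|b\|_{L^{\infty}}$, and its zero-order coefficient is bounded by $\alpha$; all of these quantities enter the final Harnack constant.

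Next, I would invoke the standard parabolic Harnack inequality on the cylinder $Q(4R)$. In the past-to-future cylinder formulation, every nonnegative strong solution of an equation of the above form satisfies
\begin{equation}
	\sup_{Q((0,-4R^2),\,R/2)} u \;\le\; c_{\mathrm{H}}\cdot \inf_{Q(R)} u,
\end{equation}
where $c_{\mathrm{H}}$ depends only on $n,\lambda,\Lambda,R,\|b\|_{L^{\infty}}$ and the bound $\alpha$ on the zero-order coefficient. This is precisely the conclusion of the proposition.

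The only technical point, and the closest thing to a genuine obstacle, is the possible degeneracy arising when $u$ vanishes at some interior point of $Q(4R)$, which renders the above definition of $c$ discontinuous. I would handle this in either of two standard ways: either invoke the strong parabolic minimum principle, which forces $u\equiv 0$ on the whole backward parabolic component through any interior zero, so that both sides of the asserted estimate vanish and the inequality is trivial; or regularize by setting $u_{\eps} := u + \eps$ for $\eps>0$, observe that $-\dot{u}_{\eps} + a^{ij}(u_{\eps})_{ij} + b^i(u_{\eps})_i = f$ satisfies $|f| \le \alpha u \le \alpha u_{\eps}$, apply the Harnack inequality to $u_{\eps}$, and let $\eps \to 0^+$. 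Beyond this regularization there is nothing conceptual to overcome; the task is essentially bibliographical, namely to cite a version of the parabolic Harnack inequality flexible enough to accommodate an $L^{\infty}$ drift and a bounded zero-order term, which is exactly what the Krylov--Safonov theory provides.
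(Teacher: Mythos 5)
Your proof is correct, but it takes a genuinely different route from the paper's. The paper argues in two one-sided steps: it applies the local maximum principle \cite[Theorem 7.36]{Lieberman} directly to $u$ to control the supremum, and it applies the weak Harnack inequality \cite[Theorem 7.37]{Lieberman} to the exponentially weighted function $\eta := e^{\alpha t}u$ to control the infimum. The point of the weight is that $-\dot\eta + a^{ij}\eta_{ij} + b^i\eta_i = e^{\alpha t}(f - \alpha u) \le 0$, so $\eta$ is a nonnegative supersolution of the zeroth-order-free operator, and no division by $u$ is ever required. You instead absorb $f$ into a sign-indefinite but bounded zero-order coefficient $c = f/u$ and invoke the full Krylov--Safonov Harnack inequality for equations with bounded drift and zero-order term in a single citation. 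This is more compact, but it forces you to confront the degeneracy of $c$ on $\{u=0\}$, which you handle correctly (the regularization $u_\eps = u + \eps$ is the cleaner of your two fixes, since $|f| \le \alpha u \le \alpha u_\eps$ gives a uniform bound $|c_\eps| \le \alpha$, so the Harnack constant is $\eps$-independent and the limit closes). In short: the paper's exponential-weight decomposition trades one extra citation for never having to divide by $u$; your absorption argument trades a regularization step for a single, cleaner reduction to the textbook Harnack inequality. Both are standard and both work.
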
 
\begin{proof}
We apply \cite[Theorem 7.36]{Lieberman} to the function $u$ and \cite[Theorem 7.37]{Lieberman} to the function $\eta := e^{\alpha t} u$.
\end{proof}

We infer the following
\begin{Lemma}
\label{FLowerBound}
	There exists a constant $0< c_5 = c_5(c_1, c_2, c_3, \tilde \eps)$ such that for all $t\in [0, T^*)$
	\begin{equation}
		F- (a-\alpha) \geq c_5.
	\end{equation}
\end{Lemma}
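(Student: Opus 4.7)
Set $u := F - (a-\alpha)$. Strict h-convexity ($\kappa_i > a$), monotonicity of $\tilde F$, and the normalization $\tilde F(1,\dots,1)=1$ together force $u \geq 0$, and from \eqref{EvF},
\begin{equation*}
\dot u - F^{ij} u_{;ij} = \Phi\,(F - f), \qquad \Phi := F^{ij} h_i^k h_{kj} - a^2 F^{ij} g_{ij} = \sum_{i=1}^n f_i (\kappa_i^2 - a^2) \geq 0.
\end{equation*}
By Corollary \ref{UniformlyParabolic} and the bounds $F \leq c_2$ and $|A| \leq c_3$ established in the previous section, the operator is uniformly parabolic with uniformly bounded coefficients, and $\tilde f := f - (a-\alpha) \in [0, c_2]$. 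The plan is to combine a one-point lower bound on each $M_t$ with the parabolic Harnack inequality of Proposition \ref{HarnackIneq} to propagate that bound to all of $M_t$.

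For the one-point bound, at each $t \in [0, T^*)$ the convex body bounded by $M_t$ lies inside a geodesic ball of radius $R_t \leq c_1$ (Corollary \ref{BoundRho}), and at any tangency point $x_t \in M_t \cap \partial B_{R_t}$ the standard interior-contact comparison of second fundamental forms gives $\kappa_i(x_t, t) \geq a\coth(a R_t) \geq a\coth(a c_1)$ for every $i$, whence
\begin{equation*}
u(x_t, t) \geq a\coth(a c_1) - \alpha - (a - \alpha) = a\bigl(\coth(a c_1) - 1\bigr) =: c_6 > 0.
\end{equation*}

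Since $u$ itself need not be uniformly bounded away from zero, I apply Proposition \ref{HarnackIneq} to the shift $v := u + c$ for a small positive constant $c$ to be fixed. The evolution of $v$ reads
\begin{equation*}
-\dot v + F^{ij} v_{;ij} = -\Phi\,v + \Phi\,(c + \tilde f) =: \psi,
\end{equation*}
and the estimate $v \geq c$ yields $|\psi| \leq \|\Phi\|_\infty (2 + c_2/c)\, v$, so $v$ fits the hypothesis of the proposition. In a finite atlas of coordinate charts on $\bbs^n$ the covariant equation becomes a Euclidean parabolic one with uniformly bounded drift (the Christoffels of $g_{ij}$ being controlled by the curvature and graph bounds). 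A finite chain of Harnack inequalities along space-time cylinders -- finite because $\diam(M_t) \leq 2c_1$ -- combined with $\sup v \geq c_6 + c$ delivers
\begin{equation*}
\inf_{M_t} u = \inf_{M_t} v - c \geq \frac{c_6 + c}{C(c)} - c
\end{equation*}
for a constant $C(c)$ depending only on $c$ and the fixed geometric data, and choosing $c$ small enough that $c(C(c)-1) < c_6$ then yields a uniform $c_5 > 0$ with $u \geq c_5$ for $t$ bounded away from $0$; the short initial interval is handled by continuity of the smooth flow together with the strict h-convexity of $M_0$. The main obstacle is this Harnack step: the shift $v=u+c$ is unavoidable because $u$ vanishes on horospheres, and one has to verify that the algebraic condition $c(C(c)-1)<c_6$ admits a solution despite $C(c)\to\infty$ as $c\to 0$ through the dependence $\alpha(c)\sim 1/c$ -- this is routine since all other geometric constants are $c$-independent.
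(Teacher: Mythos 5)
Your strategy is the same as the paper's: establish a one-point lower bound on $u := F-(a-\alpha)$ at a point of contact with a bounding geodesic sphere of controlled radius, then propagate it via the parabolic Harnack inequality. The one-point bound is correct (and arguably stated more carefully than in the paper, where the $-\alpha$ contribution is dropped from $F(x_t)\geq a\coth(a\rho)$).

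The gap is in the Harnack step. To apply Proposition~\ref{HarnackIneq} one needs the forcing term $\Phi(F-f)$ to satisfy $|\Phi(F-f)|\leq \alpha\, u$ for some bounded $\alpha$. Since $\Phi(F-f)=\Phi\,u-\Phi\,\tilde f$ and the global term $\tilde f$ has no pointwise relation to $u$, you cannot get such a bound from the bounds $\Phi\leq\|\Phi\|_\infty$ and $\tilde f\leq c_2$ alone; this is precisely why you introduce the shift $v=u+c$. But the shift produces a zero-order coefficient of size $\alpha(c)\sim\|\Phi\|_\infty c_2/c$, so the Harnack constant $C(c)$ blows up as $c\to 0$, and the algebraic condition you need, namely $c_6+c>c\,C(c)$, is not obviously satisfiable: $c\,C(c)\to\infty$ as $c\to 0$ (typically exponentially in $1/c$ for the zero-order dependence in Krylov--Safonov/Lieberman), and $c\,C(c)-c\geq c\,(C_\infty-1)\to\infty$ as $c\to\infty$. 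Dismissing this as ``routine'' is not justified; it is the crux of the matter and is left unresolved.

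The paper avoids the shift entirely by proving the pointwise inequality $\Phi=\sum_{i}f_i(\kappa_i^2-a^2)\leq c\,(F-(a-\alpha))$, which is the missing ingredient in your argument. This follows from $\kappa_i+a\leq c_3+a$, the uniform ellipticity $f_i\leq c_0$ from Corollary~\ref{UniformlyParabolic}, and --- in the convex case --- inequality~\eqref{FHIneq}, which gives $H-na\leq n(F-(a-\alpha))$, so
\begin{equation*}
\Phi\leq (c_3+a)\sum_i f_i(\kappa_i-a)\leq (c_3+a)\,c_0\,(H-na)\leq n\,c_0\,(c_3+a)\,(F-(a-\alpha)),
\end{equation*}
and an analogous estimate via the pinching~\eqref{PresPinchEq} in the concave case. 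With this, $|\Phi(F-f)|\leq 2c_2\,c\, u$, so Proposition~\ref{HarnackIneq} applies directly to $u$ with a Harnack constant independent of any auxiliary parameter, and no limiting procedure or algebraic condition needs to be checked. You should replace the shift device with this pointwise bound on $\Phi$.
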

\begin{proof}
	We will use an idea from \cite[Section 7]{AndrewsContractionEuc} in the euclidean setting.  Let $0<T\leq T^*$ be the maximal time such that the hypersurfaces $M_t$ remain strictly $h$-convex up to time $T$. For $t\in [0, T)$ let $x_t\in M_t$ be a point in contact with an enclosing sphere of radius $\frac{1}{2c_1}<\rho < 2 c_1$. This implies 
	\begin{equation}
	\label{BoundFSupBelow}
		\underset{x\in M_t}{\sup} F(x) \geq F(x_t) \geq a \coth (a \rho) \geq a \coth(\frac{a}{2c_1}).
	\end{equation}
	Next we note that $\eta:= F- (a-\alpha)$ satisfies the evolution equation
	\begin{equation}
		L(F- (a-\alpha)) = (F-f) \left(F^{ij}h_i^kh_{kj} - a^2 F^{ij}g_{ij}\right).
	\end{equation}
	In the case of a convex curvature function we have in view of \eqref{FGIneq}
	\begin{equation}
		0\leq \sum_{i=1}^n f_i (\kappa_i^2 -a^2) \leq c \sum_{i=1}^n f_i (\kappa_i -\alpha - (a-\alpha))Ê\leq c (F- (a-\alpha)).
	\end{equation}
	In the case of a concave curvature function we have in view of the pinching estimate and \eqref{FGIneq}
	\begin{equation}
	\begin{split}
		0\leq &\sum_{i=1}^n f_i (\kappa_i^2 -a^2) \leq c (H - na) \leq \frac{c}{\tilde{\eps}}
		\left(\sum_{i=1}^n f_i (\kappa_1 - \alpha) - (a-\alpha)\right) \\&\leq \frac{c}{\tilde{\eps}} (F - (a-\alpha)).
	\end{split}
	\end{equation}
	Since $L$ is uniformly parabolic in view of Corollary \ref{UniformlyParabolic}, we can apply Proposition \ref{HarnackIneq} together with \eqref{BoundFSupBelow} to obtain the desired lower bound for \mbox{$F- (a-\alpha)$} up to time $T$. This also implies $T = T^*$ in view of the pinching estimate.
\end{proof}

Hence we know that as long as the hypersurfaces are strictly $h$-convex, Lemma \ref{PresPinch} and Lemma \ref{FLowerBound} are valid. This implies that the hypersurfaces remain uniformly strictly $h$-convex up to $t=T^*$.

Finally, we want to establish the higher order estimates to obtain the long time existence of the flow. At this point we have obtained uniform $C^2$-estimates for $u$ in $[0, T^*)$, which are independent of $T^*$, and we have shown the uniform ellipticity of the operator $F^{ij}$ (see Corollary \ref{UniformlyParabolic}). To obtain higher order estimates, uniformly in time, we can follow the same procedure as in \cite[Section 8]{McCoyMixedAreaGen} and \cite[Section 6]{RivSin}, see also \cite[Section 8]{MakVolLor} for a more detailed account of this procedure. This shows the long time existence of the flow by standard arguments, see for example \cite[Section 2.6]{GerhCP}.

\section{Convergence to a geodesic sphere}

To prove the convergence of the flow to a geodesic sphere, firstly we will show that the pinching of the principal curvatures is improving at an exponential rate. Then we will use the argument from \cite[Theorem 3.5]{SchulzeConvexity} to obtain the exponential convergence of the flow to a geodesic sphere.

\begin{Proposition}
\label{ImprovedPinching}
	There exists $\lambda > 0$ and $t_0>0$ such that we have for all $t\in [t_0, \infty)$ at points $x\in M_t$
	\begin{equation} 
	\label{ImprovedPinchingEq}
		\kappa_1 - a\geq \frac{1}{n}(1-e^{-\lambda t}) (H- na),
	\end{equation}
	where we denote by $\kappa_1$ the smallest principal curvature of $M_t$ at $x$.
\end{Proposition}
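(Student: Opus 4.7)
The plan is to adapt the Andrews tensor maximum principle argument from Lemma \ref{PresPinch} using a time-dependent pinching function. In the concave case, define
\begin{equation*}
\phi(t) := \tfrac{1}{n}(1 - Ce^{-\lambda t}), \qquad C := 1 - n\epst,
\end{equation*}
so that $\phi(0) = \epst$, and set $S_{ij}(t) := h_{ij} - (a + \phi(t)(H-na))g_{ij}$. By Lemma \ref{PresPinch} one has $S_{ij}(0) \ge 0$. In the convex case, use the analogous time-dependent pinching $\phi_1(t) := 1 - (1-\eps)e^{-\lambda t}$ and the tensor $h_{ij} - (a + \phi_1(t)(F-(a-\al)))g_{ij}$; once this tensor is shown to be nonnegative for all $t$, the desired inequality \eqref{ImprovedPinchingEq} follows from $F-(a-\al) \ge \tfrac{1}{n}(H-na)$, which is a consequence of \eqref{FHIneq}, together with the elementary bound $1-(1-\eps)e^{-\lambda t} \ge 1-e^{-\lambda t}$.

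Compared to the computation in Lemma \ref{PresPinch}, the only new contribution to the evolution equation for $S_{ij}$ is the term $-\phi'(t)(H-na)g_{ij}$ arising from the time derivative of $\phi$. Applying Theorem \ref{AndPinch} (together with Theorem \ref{AndPinchLem} in the concave case), at a null eigenvector $v$ with $|v|=1$ the nonnegativity condition of the tensor maximum principle reduces, via the estimate \eqref{PinchMainConc}, to
\begin{equation*}
\phi(t)^2\bigl(1-\phi(t)n\bigr) a (H-na)^2 \sum_{i=1}^n f_i \ge \phi'(t)(H-na).
\end{equation*}
Substituting the explicit form of $\phi$ and cancelling the common factor $Ce^{-\lambda t}$ further reduces this to $\phi(t)^2 a (H-na)\sum f_i \ge \lambda/n$. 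The convex case reduces analogously, via \eqref{PinchMainConv}, to $2a\phi_1(t)(F-(a-\al)) \ge \lambda$.

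The main obstacle is establishing uniform pointwise lower bounds for $H-na$ and for $F-(a-\al)$ so that a single $\lambda > 0$ works for all times. The bound $F-(a-\al) \ge c_5 > 0$ is exactly Lemma \ref{FLowerBound}. For $H-na$ in the concave case, \eqref{FHIneq} gives $F \le H/n - \al$, hence $H - na \ge n(F+\al-a) \ge nc_5$; in the convex case, the preserved pinching from Lemma \ref{PresPinch} yields $\kappa_1 - a \ge \eps(F-(a-\al)) \ge \eps c_5$ and therefore $H \ge n\kappa_1 \ge n(a+\eps c_5)$. Combining these with $\sum f_i \ge 1$ from \eqref{FGIneq} (concave case) and with the monotonicity bounds $\phi(t) \ge \epst$, $\phi_1(t) \ge \eps$, both key inequalities hold whenever $\lambda > 0$ is sufficiently small (e.g.\ $\lambda \le n^2\epst^2 a c_5$ in the concave case and $\lambda \le 2a\eps c_5$ in the convex case). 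Theorem \ref{AndPinch} then propagates $S_{ij}(t) \ge 0$ for all $t \ge 0$, which is precisely \eqref{ImprovedPinchingEq}; any strictly positive $t_0$ enters only to absorb transient constants.
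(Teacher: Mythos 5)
Your proof is correct and follows essentially the same approach as the paper: you insert a time-dependent pinching coefficient into the tensor-maximum-principle computation of Lemma \ref{PresPinch}, and the extra $\phi'$-term from the time derivative is absorbed by choosing $\lambda$ small enough via the uniform lower bound $F-(a-\al)\geq c_5$ from Lemma \ref{FLowerBound}, exactly as in the paper. The only cosmetic difference is that you normalize $\phi$ so that $\phi(0)=\epst$ (resp.\ $\phi_1(0)=\eps$) and can therefore start the tensor maximum principle at $t=0$, whereas the paper keeps $\bar\eps=\tfrac1n(1-e^{-\lambda t})$ and instead starts the argument at $t_0=\zeta/\lambda$ with $\zeta$ chosen so that $1-e^{-\zeta}\leq\eps$.
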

\begin{proof}
	Firstly, we assume $F$ to be a convex curvature function. We define $S_{ij} = h_{ij} - (a + (1-e^{-\lambda t})\, (F - (a-\alpha))) g_{ij}$, where $\lambda > 0$ is a small number yet to be chosen. We use Theorem \ref{AndPinch}, however we start at time $t_0 :=\frac{\zeta}{\lambda}$ instead of $t_0 = 0$, where $\zeta>0$ is a constant chosen such that $\eps \geq 1- e^{-\zeta}$ and $\eps$ is chosen as in Lemma \ref{PresPinch}. As in the proof of Lemma \ref{PresPinch} (we also use analogous notation as in that Lemma), we obtain with $\bar\eps := 1-e^{-\lambda t}$
	\begin{equation}
		N_{ij}v^iv^j \geq 2\bar\eps a\, e^{-\lambda t} (F-(a-\alpha))^2 - \lambda e^{-\lambda t} (F - (a-\alpha))\geq 0,
	\end{equation}
	if we choose $\lambda > 0$ small enough depending on $c_5$. Hence we obtain the desired inequality in view of inequality \eqref{FHIneq}.

	Now we assume $F$ to be a concave and inverse concave curvature function. Let $\lambda > 0$ be a small number depending only on $c_5$. Let $S_{ij} = h_{ij} + (-a - \eps H + \eps a\, n)g_{ij}$ with $\bar\eps := \frac1n(1- e^{-\lambda t})$. Again we obtain
	\begin{equation}
		N_{ij}v^i v^j \geq \bar\eps^2\, e^{-\lambda t} a (H- an)^2 \sum_{i=1}^n f_i - \tfrac\lambda n e^{-\lambda t} (H - an).
	\end{equation}
	Since $H- an \geq n (F- (a-\alpha))$ in view of \eqref{FHIneq} and $F^{ij}g_{ij} \geq 1$ in view of \eqref{FGIneq} we obtain the inequality \eqref{ImprovedPinchingEq} from Theorem \ref{AndPinch} (again starting at time $t_0 := \frac{\zeta}{\lambda}$ with $\lambda$ small enough and $\zeta$ as above).
\end{proof}

From the preceding Proposition we can conclude with the same arguments as in \cite[Theorem 3.5]{SchulzeConvexity} that the flow converges exponentially in $C^\infty$ to a geodesic sphere:
\begin{Korollar}
	There exists $t_0 >0$ and positive constants $C$, $r_0$, $\delta_i$, $C_i$ for $iÊ\in \bbn_+$ such that for all $t\in [t_0, \infty)$ the hypersurfaces $M_t$ can be written as graphs over a geodesic sphere, $M_t = $ graph$_{|\bbs^n}u$, and there holds
	\begin{align}
	\label{ImprovedPinchingEq2}
		|A|^2 - \frac{H^2}{n} &\leq C_0 e^{-\delta_0 t},\\
	\label{DerivativesExpEq}
		\Vert \nabla^{(i)} A\Vert &\leq C_i e^{-\delta_i t},\\
	\label{FExpEq}
		|F - f| &\leq C e^{-\delta_1 t}\\
	\label{GraphExpEq}
		|u - r_0| &\leq C e^{-\delta_1 t}.
	\end{align} 
	Hence the flow converges exponentially in $C^\infty$ to a geodesic sphere of radius $r_0$, which is determined by $V_{n+1-k}(M_0)$.
\end{Korollar}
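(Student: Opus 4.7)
The plan is to follow the strategy of Schulze in \cite{SchulzeConvexity}, bootstrapping from the exponentially improving pinching of Proposition~\ref{ImprovedPinching} first to $C^0$ then to $C^\infty$ decay of the umbilic defect, and finally translating this into exponential graph convergence. First I would deduce \eqref{ImprovedPinchingEq2}. Using the identity $n|A|^2-H^2=\sum_{i<j}(\kappa_i-\kappa_j)^2$, it suffices to bound $\kappa_n-\kappa_1$. Since $\kappa_n\leq H-(n-1)\kappa_1$, rearranging Proposition~\ref{ImprovedPinching} as
\begin{equation*}
H-n\kappa_1\leq \frac{ne^{-\lambda t}}{1-e^{-\lambda t}}\,(\kappa_1-a)
\end{equation*}
together with the uniform bound $|\kappa_1-a|\leq c_3$ from Section~5 yields $\kappa_n-\kappa_1\leq Ce^{-\lambda t}$ for $t$ large, and thus \eqref{ImprovedPinchingEq2} with $\delta_0=2\lambda$.

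Next I would establish the derivative estimates \eqref{DerivativesExpEq}. Working with the traceless second fundamental form $\mathring h_{ij}=h_{ij}-\tfrac{H}{n}g_{ij}$, we already have $|\mathring h|^2\leq C_0e^{-\delta_0 t}$ pointwise, while Section~6 provides uniform $C^\infty$-estimates on $A$ (and hence on $\mathring h$) independent of $t$. A standard Hamilton-type interpolation on the hypersurfaces (which have uniformly bounded geometry by Corollary~\ref{BoundRho} and the uniform parabolicity of Corollary~\ref{UniformlyParabolic}) then gives $\|\nabla^{(i)}\mathring h\|_\infty\leq \tilde C_i e^{-\tilde\delta_i t}$ for every $i\geq 0$. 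To recover decay of $\nabla^{(i)}A$ itself, I would use the contracted Codazzi identity $(n-1)\nabla_iH=n\,g^{jk}\nabla_j\mathring h_{ik}$ to bound $|\nabla H|\leq c|\nabla\mathring h|$; iterating and combining with interpolation yields \eqref{DerivativesExpEq}.

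To conclude \eqref{FExpEq} and \eqref{GraphExpEq}, observe that $\nabla F=F^{ij}\nabla h_{ij}$ with $F^{ij}$ uniformly bounded, so $|\nabla F|\leq Ce^{-\delta_1 t}$; by the diameter bound in Corollary~\ref{BoundRho} we get $\mathrm{osc}_{M_t}F\leq Ce^{-\delta_1 t}$, and since $f$ is a (positive) weighted spatial mean of $F$ over $M_t$, \eqref{FExpEq} follows. Then the graph equation \eqref{dotU} gives $|\dot u|\leq v^{-1}|F-f|\leq Ce^{-\delta_1 t}$, so $u(\cdot,t)$ is Cauchy in $C^0(\bbs^n)$ and converges to a limit function, which, after applying the same interpolation with the uniform $C^\infty$-bounds on $u$, must be a constant $r_0$ with exponential decay in every $C^k$-norm. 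The value $r_0$ is then uniquely determined by the preservation of the mixed volume (Lemma~\ref{PresVol}) via $V_{n+1-k}(B_{r_0})=V_{n+1-k}(M_0)$, using strict monotonicity of this quantity in the radius of geodesic spheres.

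The main obstacle is the interpolation step: one has to argue carefully that the pointwise decay of $\mathring h$, combined with only uniform (not decaying) higher-derivative bounds, actually propagates to exponential decay of all derivatives, and that this is preserved when passing from $\mathring h$ to $h$ and from $h$ to the graph function $u$ over a fixed reference sphere. Once this is in place, passing from $C^0$-convergence of $u$ to $C^\infty$-convergence, and identifying the limit radius through the preserved mixed volume, are routine.
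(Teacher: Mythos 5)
Your proof is correct and follows essentially the same route as the paper's: deduce the umbilic-defect decay from the improved pinching, upgrade to decay of all derivatives of $A$ by Hamilton--Schulze-type interpolation against the uniform $C^\infty$-estimates, then use the gradient bound on $F$ plus the diameter bound to control the oscillation of $F$ and hence $|F-f|$, and finally integrate $\dot u = -v^{-1}(F-f)$ to get exponential convergence of the graph function. The paper states these steps very tersely (citing Schulze for the interpolation and giving no details on the algebraic reduction of $|A|^2 - H^2/n$ to $\kappa_n - \kappa_1$), whereas you spell them out; the one point both treatments leave implicit is why the limiting graph function is a \emph{constant} $r_0$ rather than the graph representation of a geodesic sphere over an off-centre reference sphere --- this relies on choosing the reference sphere centred at the limit of the inball centres, as in Schulze's argument, rather than following from interpolation alone as your phrasing suggests.
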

\begin{proof}
	The estimate \eqref{ImprovedPinchingEq2} follows directly from \eqref{ImprovedPinchingEq}. By interpolation we obtain the estimates \eqref{DerivativesExpEq} (see the proof of \cite[Theorem 3.5]{SchulzeConvexity}) for $i\in \bbn_+$. The estimate \eqref{FExpEq} follows from \eqref{DerivativesExpEq} for $i=1$ and the boundedness of $\rho_t$ (which implies the boundedness of $\diam(M_t)$). Now since $|F-f|$ is integrable over time and $\rho_t \geq c_1^{-1}$ we know there exists $t_0 \in [0, \infty)$, such that $M_t$ can be represented as graph $u$ for $t\in [t_0, \infty)$. The last estimate \eqref{GraphExpEq} then follows from \eqref{FExpEq} and \eqref{dotU}.
\end{proof}

\section{Volume inequalities in hyperbolic space}

In this section we note, that we can use an idea from \cite[Section 10]{McCoyMixedAreaGen} to prove volume inequalities in hyperbolic space for strictly $h$-convex hypersurfaces. We only give the easiest example of how to use the volume preserving curvature flows to obtain such inequalities.

\begin{Korollar}
Let $M_0$ be a strictly $h$-convex hypersurface in hyperbolic space. Let $R_0 > 0$ be such that a geodesic sphere of radius $R_0$ satisfies $V_{n+1}(M_0) = V_{n+1}(B_{R_0})$. Then there holds
\begin{equation}
	\frac{V_{n+1}(M_0)}{|M_0|} \leq \frac{V_{n+1}(B_{R_0})}{|B_{R_0}|}.
\end{equation} 
\end{Korollar}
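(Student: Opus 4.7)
The plan is to use the main theorem with the mean curvature function $F = H/n$ (normalized so that $F(1,\ldots,1) = 1$) and $k = 0$, so that the volume preserving flow \eqref{floweq} with global term $f = f_0$ preserves $V_{n+1}$ and converges exponentially to a geodesic sphere. Since $V_{n+1}$ is preserved, the limiting geodesic sphere must be $B_{R_0}$ by the definition of $R_0$, and it suffices to show that the surface area $|M_t|$ is monotonically nonincreasing along the flow. Combined with convergence in $C^\infty$, this gives $|M_0| \geq \lim_{t\to\infty}|M_t| = |B_{R_0}|$, and the corollary follows from
\begin{equation*}
\frac{V_{n+1}(M_0)}{|M_0|} = \frac{V_{n+1}(B_{R_0})}{|M_0|} \leq \frac{V_{n+1}(B_{R_0})}{|B_{R_0}|}.
\end{equation*}

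First I would verify that Assumption \ref{MainAssumption} is satisfied with $\alpha = 0$ by the normalized mean curvature $\tilde F(\kappa) = \frac{1}{n}\sum_i \kappa_i$ (it is smooth, symmetric, positively homogeneous of degree $1$, strictly monotone, positive on $\Gamma_+$, normalized, and simultaneously convex and concave), so that Theorem \ref{MainTheorem1} applies and yields the long time existence, preservation of $V_{n+1}$, strict $h$-convexity of every $M_t$, and exponential $C^\infty$-convergence of $M_t$ to a geodesic sphere. Since the limit sphere has the same $V_{n+1}$ as $M_0 = V_{n+1}(B_{R_0})$, it must have radius $R_0$.

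Next I would compute $\frac{d}{dt}|M_t|$. Using \eqref{EvDet} and $f_0 = |M_t|^{-1}\int_{M_t} F\, d\mu_t$, we have
\begin{equation*}
\frac{d}{dt} |M_t| = -\int_{M_t} (F - f_0) H\, d\mu_t.
\end{equation*}
With $F = H/n$, this becomes
\begin{equation*}
\frac{d}{dt} |M_t| = -\frac{1}{n}\left( \int_{M_t} H^2\, d\mu_t - \frac{1}{|M_t|}\Bigl(\int_{M_t} H\, d\mu_t\Bigr)^{2} \right) \leq 0
\end{equation*}
by the Cauchy--Schwarz inequality applied to $H$ and $1$ in $L^2(M_t)$. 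Hence $|M_t|$ is nonincreasing.

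The approach has no real obstacle since all nontrivial analytic work is encapsulated in Theorem \ref{MainTheorem1}: the only input beyond it is the one-line Cauchy--Schwarz computation above and the observation that $V_{n+1}$-preservation plus convergence to a geodesic sphere pins down the limit radius to $R_0$. The mild point worth stating explicitly is that exponential $C^\infty$-convergence of $M_t$ to $B_{R_0}$ implies $|M_t| \to |B_{R_0}|$, which combined with monotonicity gives $|M_0| \geq |B_{R_0}|$; then dividing the identity $V_{n+1}(M_0) = V_{n+1}(B_{R_0})$ by these two areas yields the claimed inequality.
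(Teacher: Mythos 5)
Your proof is correct and is essentially the paper's own argument: run the $V_{n+1}$-preserving flow with (normalized) mean curvature, observe by Cauchy--Schwarz (the paper cites H\"older, which is the same inequality here) that $|M_t|$ is nonincreasing, and pass to the limit geodesic sphere $B_{R_0}$. You are slightly more careful than the paper in using $\tilde F = H/n$ rather than $H$ to satisfy the normalization in Assumption \ref{MainAssumption}, but otherwise the two proofs coincide.
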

\begin{proof}
	We use the volume preserving curvature flow with $F=H$ and obtain that
	\begin{equation}
		\tfrac{d}{dt} |M_t| \leq 0,
	\end{equation}
	in view of the H\"older inequality. Since $M_t$ converges to a geodesic sphere of radius $R_0$, we obtain $|M_0| \geq |B_{R_0}|$, showing the claimed inequality.	
\end{proof}

Unfortunately, we were not able to prove all Minkowski inequalities (only some further special cases), due to the fact, that the volume preserving term has a different structure than in the euclidean case.

\bibliographystyle{plain}
\bibliography{Bibliography}

\end{document}